\documentclass[12pt,a4paper]{amsart}
\usepackage{graphics}
\usepackage{epsfig}
\usepackage{graphicx}
\theoremstyle{plain}
\usepackage{amssymb}
\usepackage{color}
\advance\hoffset-20mm \advance\textwidth40mm

\newtheorem{theorem}{Theorem}
\newtheorem{lemma}{Lemma}
\newtheorem*{theo*}{Theorem}
\newtheorem{proposition}{Proposition}
\newtheorem{corollary}{Corollary}
\theoremstyle{definition}
\newtheorem{definition}{Definition}
\newtheorem*{definition*}{Definition}
\newtheorem{example}{Example}
\newtheorem{remark}{Remark}

\begin{document}
\sloppy
\title[Maximal subalgebras of the Lie algebra $W_n(\mathbb{K})$]
{Maximal subalgebras of the Lie algebra $W_n(\mathbb{K})$}
\author
{Y.Chapovskyi, A.Petravchuk, O.Tyshchenko}
\address{Institute of Mathematics, National Academy of Sciences of Ukraine,
Tereschenkivska street, 3, 01004 Kyiv, Ukraine}
\email{safemacc@gmail.com}
\address{ Faculty of Mechanics and Mathematics,
Taras Shevchenko National University of Kyiv, 64, Volodymyrska street, 01033  Kyiv, Ukraine}
\email{ petravchuk@knu.ua, apetrav@gmail.com}
\address{ Faculty of Mechanics and Mathematics,
	Taras Shevchenko National University of Kyiv, 64, Volodymyrska street, 01033  Kyiv, Ukraine}
\email{oleg.tyshchenko@knu.ua}

\date{\today}
\keywords{Lie algebra, maximal subalgebra, derivation, polynomial ring, module over polynomial ring }
\subjclass[2000]{17B65, 17B66, 17B05}

\begin{abstract}
Let $K$ be an algebraically closed field of characteristic zero, $A= K[x_1, \dots, x_n]$ the polynomial ring in $n$ variables, and let $W_n(K)$ be the Lie algebra of all $K$-derivations of $A.$  This Lie algebra also  is  the free $A$-module of rank $n$ over the ring $A,$ so every subalgebra of $W_n(K)$ has a rank $\leq n$ over $A.$ We prove that every maximal subalgebra of rank $\leq n$ of $W_n(K)$ is a simple Lie algebra. If a maximal subalgebra $L\subset W_n(K)$ has rank $n$ and is a submodule of $W_n(K)$ then $L$ is not simple. Moreover, $L$ is of the form $L=\{ D\in W_n(K) \  |  \ D(I)\subseteq I\} $ for some ideal $I$ of the ring $A.$ It is also proved that, for a simple derivation $D$ on the ring $K[x, y]$, the subalgebra $K[x, y]D$ is a maximal subalgebra   of $W_2(K).$ 
 \end{abstract}
\maketitle

%%%%%%%%%%%%%%%%%%%%%%%%%%%%%%%%%%%%%%%%%%%%%%%%%%%%%%%%%%%
	\section{Introduction}
Let $K$ be an algebraically closed field of characteristic zero, $A = K[x_1, \dots, x_n]$ the polynomial ring, and $R = K(x_1, \dots, x_n)$ the field of rational functions.
Recall that a $K$-linear map $D: A \rightarrow A$ is called a $K$-derivation on $A$ if it satisfies the Leibniz's rule:	$D(fg) = D(f)g + fD(g)$  for all $ f, g \in A.$
Every $K$-derivation (or simply derivation) can be uniquely written in the form:
$$	D = f_1 \frac{\partial}{\partial x_1} + \dots + f_n \frac{\partial}{\partial x_n}, \ \ 
f_i=f_(x_1, \ldots , x_n) \in A,$$ where $\frac{\partial}{\partial x_i}$ are partial derivatives.
The vector space  $W_n(K)$ (over the field $K$) of all derivations on $A$ is a Lie algebra over $K$ with respect to the Lie bracket $	[D_1, D_2] = D_1 \circ D_2 - D_2 \circ D_1;$  we denote this Lie algebra  by $W_n.$ 
On the other hand,  $W_n$ is a free module over $A$ with the standard basis $\frac{\partial}{\partial x_1}, \dots, \frac{\partial}{\partial x_n}$. The Lie algebra $W_n$ is of great interest from the  geometric point of view, since  $W_n$ consists of all polynomial vector fields on $K^n$.
Although the Lie algebra $W_n$ has been  studied by many authors, the structure of its subalgebras is still  poorly understood. In particular,  little is known about its maximal subalgebras. Some examples of maximal subalgebras can be found in \cite{Rud1}, \cite{Bavula}, \cite{ESS}. In the case $n=1$, that is, for $W_1$,  a description of maximal subalgebras of $W_1$  is given  in \cite{Bell}.
Maximal subalgebras of $W_n(K)$ are of interest because every proper subalgebra of $W_n(K)$ is contained in a maximal subalgebra.  This follows from the fact  that $W_n(K)$ is finitely generated as a Lie algebra. 	 

In this paper, we study several classes of maximal subalgebras of the Lie algebra $W_n(K)$. We use  results from  \cite{J1} concerning the relationship  between ideals of subalgebras of $W_n$ and invariant ideals of the polynomial ring $K[x_1, \dots, x_n].$ 
As $W_n(K)$ is the free module over the ring $A$, for any subalgebra $L \subseteq W_n$, one can define its rank   ${\rm rk}_A L$ by the rule
${\rm rk}_A L = \dim_{R} RL.$ It is easy to show that, for any subalgebra $L \subseteq W_n$, the $A$-span  $AL$ is also a subalgebra of $W_n(K)$ and a submodule of the free $A$-module $W_n.$ Such subalgebras will be called \textit{polynomial} subalgebras.

The main results obtained in this paper are the following:

(1) Every nonzero maximal subalgebra $L$ of rank $<n$ of $W_n(K)$ is a simple Lie algebra ({Theorem \ref{th1}).
	
	(2)		Every polynomial maximal  subalgebra $M$ of rank $n$ in $W_n$ is of the form $M=\{ D \in W_n \mid D(I) \subseteq I \}$ for some proper ideal $I$ of the polynomial ring $A.$ In particular,   $M$ is a not a simple Lie algebra. (Theorem \ref{th3a}).		
	
	(3) If $D$ is a simple derivation of the polynomial ring $K[x, y]$, then $AD$ is a maximal subalgebra of $W_2(K).$ Conversely, every maximal subalgebra of $W_2$ of rank 1 is of the form $AD$ for a simple derivation $D$ on $A$ (Theorem \ref{th2}).
	
	(4) Every finite dimensional maximal subalgebra of $W_n$ has dimension  $\ge n+1$ (Theorem \ref{th3}). Moreover, there exists a maximal subalgebra of dimension $n^2 + 2n$ of $W_n(K),$  this subalgebra is isomorphic to the simple Lie algebra $\mathfrak{sl}_{n+1}(K)$ (Theorem \ref{th5}).
	
	We use standard notation in the paper.  Recall that $W_n(\mathbb{K})$ has the standard grading
	${W_n = \bigoplus_{i \ge -1} W_n^{[i]}}$, where
	$$W_n^{[i]} = \{ \sum_{j=1}^n f_j \frac{\partial}{\partial x_j} \mid   \text{where} \ f_j  \ \text{is homogeneous}  \ \text{with} \   \deg f_j = i+1, \ j=1,\ldots n  \}.$$
	The component $W_n^{[0]}$ is a subalgebra of $W_n(K)$ and $W_n^{[0]} \simeq \mathfrak{gl}_n({K})$. Besides $W_n^{[0]} = M_0 \oplus N_0$ where $M_0$ consists of linear derivations with zero divergence, $N_0 = KE_n$ where $E_n = \sum_{i=1}^n x_i \frac{\partial}{\partial x_i}$ is the Euler derivation. Since $M_0 \simeq \mathfrak{sl}_n(K)$, we see that all the components $W_n^{[i]}$ are 
	$\mathfrak{sl}_n(K)$-modules of finite dimension over ${K}$. A derivation $D\in W_n $	is called a simple derivation if the only $D$-invariant ideals of the polynomial ring $A=K[x_1, \ldots , x_n]$ are $\{ 0\}$ and $A.$
	If $D = \sum _{i=1}^nf_i \frac{\partial}{\partial x_i}$, then the divergence $\text{div} D$ is defined in the usual way: $\operatorname{div} D = \sum_{i=1}^n \frac{\partial f_i}{\partial x_i}$. 
	A derivation $D\in W_2$ is called a Jacobian derivation if there exists a polynomial $f\in K[x, y]$ such that, for every $h\in K[x, y],$ we have $D(h)=\det J(f, h)$, where $J(f, h)$ is the Jacobi matrix of the polynomials $f, h.$ Such a derivation will be denoted by  $D_f.$  
	
	\section{ Maximal subalgebras of  rank $<n$}
	\begin{lemma}
		Let $X = V(I), \  X \subset K^n$ be an affine algebraic variety with the defining ideal $I$, and let $p\in K^n\setminus X$ be a point. Then, for an arbitrary vector $\vec{u} = (u_1, \dots, u_n) \in K^n$, there exists a derivation $D = \sum_{i=1}^n f_i(x_1, \dots, x_n) \frac{\partial}{\partial x_i}\in W_n$  such that:
		\begin{enumerate}
			\item $D(I) \subseteq I$;
			\item $v_D(p) = (u_1, \dots, u_n)$, where $v_D$ is the vector field on $K^n$ corresponding to $D$.
		\end{enumerate}
	\end{lemma}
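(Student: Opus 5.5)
The idea is to build $D$ as a product of a polynomial vanishing on $X$ with a constant vector field. Any derivation whose coefficients all lie in $I$ maps $A$ into $I$, so it certainly preserves $I$. The freedom this leaves is enough to prescribe the value at $p$, because $p\notin X$.

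\emph{Step 1: a polynomial in $I$ with $g(p)=1$.} Since $p=(p_1,\dots,p_n)\notin X=V(I)$, some $g_0\in I$ has $g_0(p)\neq 0$. This holds whether $I$ is radical or not, since $V(I)$ is exactly the common zero set of the elements of $I$. Set $g=g_0/g_0(p)\in I$, so that $g(p)=1$.

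\emph{Step 2: the derivation.} Define
\[
D=\sum_{i=1}^n u_i\, g\,\frac{\partial}{\partial x_i}=g\Bigl(\sum_{i=1}^n u_i\frac{\partial}{\partial x_i}\Bigr)\in W_n .
\]
Its coefficients are $f_i=u_i g$, so $v_D(p)=(u_1g(p),\dots,u_ng(p))=(u_1,\dots,u_n)$, which is condition (2).

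\emph{Step 3: invariance.} For any $h\in A$ we have $D(h)=g\sum_i u_i\,\partial h/\partial x_i\in gA\subseteq I$. In particular $D(I)\subseteq D(A)\subseteq I$, which is condition (1).

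The only point needing care is Step 1: the hypothesis $p\notin X$ must be translated into the existence of an element of $I$ not vanishing at $p$. This is immediate from the definition of $V(I)$, so no genuine obstacle arises and the Nullstellensatz is not needed. If one wants more flexibility later, for instance $D$ tangent to $X$ but not vanishing identically on it, the same construction can be combined with derivations preserving $I$ that are chosen independently. For the statement as given, the construction above suffices.
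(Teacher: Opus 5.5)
Your proof is correct and is essentially the paper's argument: the paper also picks $F\in I$ with $F(p)\neq 0$ and sets $D=\frac{F}{F(p)}\bigl(u_1\frac{\partial}{\partial x_1}+\dots+u_n\frac{\partial}{\partial x_n}\bigr)$. Your Step 3 spells out the invariance that the paper only calls obvious, namely $D(A)\subseteq FA\subseteq I$.
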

	
	\begin{proof}
		(1) As $p \notin X$ there exists a polynomial $F(x_1, \dots, x_n) \in I$ such that $F(p) \neq 0$. Consider the derivation
		\[
		D = \frac{F(x_1, \dots, x_n)}{F(p)} \left( u_1 \frac{\partial}{\partial x_1} + \dots + u_n \frac{\partial}{\partial x_n} \right).
		\]
		It is obvious that $D(I) \subseteq I$ (since $F(x_1, \dots, x_n) \in I$), so the part (1) is proved.
		
		The part (2) of the lemma is obvious.
	\end{proof}	
	\begin{theorem}\label{th1}
		Let $M$ be a maximal subalgebra of the Lie algebra $W_n$ with $\text{rk}M = k < n$. Then
		
		{\rm (1)} the only $M$-invariant ideals of the polynomial ring $A$ are $A$ and $0.$
		
		{\rm (2)} $M$ is a simple subalgebra of the Lie algebra $W_n$.
	\end{theorem}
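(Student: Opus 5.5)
For part (1) I argue by contradiction. Suppose $I$ is a proper nonzero ideal of $A$ with $M(I)\subseteq I$. Put $N_I=\{D\in W_n \mid D(I)\subseteq I\}$. This is a subalgebra of $W_n$: if $D_1(I)\subseteq I$ and $D_2(I)\subseteq I$, then $[D_1,D_2](I)\subseteq I$. It contains $M$, so by maximality either $N_I=M$ or $N_I=W_n$. We have $N_I\neq W_n$, since some partial derivative $\partial/\partial x_i$ fails to preserve $I$. Indeed, if every $\partial/\partial x_i$ preserved a proper nonzero ideal $I$, then a nonzero element of $I$ of minimal degree would be differentiated down to a nonzero constant in $I$, which is impossible. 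Hence $M=N_I$.

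It remains to show that $N_I$ has rank $n$, which contradicts $\operatorname{rk}M=k<n$. Since $I\neq 0$, the variety $X=V(I)$ is a proper subset of $K^n$; pick $p\notin X$. The Lemma then gives derivations $D_j\in N_I$ with $v_{D_j}(p)=e_j$ for $j=1,\dots,n$. Concretely, $D_j=\frac{F}{F(p)}\frac{\partial}{\partial x_j}$ with $F\in I$ and $F(p)\neq 0$. These $D_j$ are linearly independent over $R$: the determinant of their coefficient matrix is a polynomial that does not vanish at $p$. So $\operatorname{rk}N_I=n$, the desired contradiction, and (1) follows. The case $I=0$ is excluded by hypothesis, and in any case every derivation preserves $0$.

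For part (2), let $J$ be a nonzero ideal of the Lie algebra $M$. I will use the result of \cite{J1} relating ideals of subalgebras of $W_n$ to invariant ideals of $A$. The expected form is this: for a subalgebra $L$ and an ideal $J$ of $L$, there is an $L$-invariant ideal $\mathfrak a$ of $A$ attached to $J$ (for instance the annihilator-type ideal $\{a\in A \mid aL\subseteq J\}$, or the ideal generated by the coefficients of $J$) with $\mathfrak a L\subseteq J$, and $\mathfrak a\neq 0$ whenever $J\neq 0$.

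Granting this, part (1) gives $\mathfrak a=A$, so $M=AM\subseteq J$ and therefore $J=M$. Hence $M$ has no proper nonzero ideals. Also $M$ is not abelian: the abelian subalgebra $KD$ is contained in the larger subalgebra $KD+K[D,D']$ type extensions, and a one-dimensional subalgebra is never maximal in $W_n$ for $n\ge1$. So $M$ is simple.

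The main obstacle is to establish, or correctly cite from \cite{J1}, that a nonzero ideal $J$ of $M$ yields a nonzero $M$-invariant ideal $\mathfrak a$ of $A$ with $\mathfrak a M\subseteq J$. If the cited result is unavailable, I would try the following. Along the way I would also first note that $AM=M$: $AM$ is a subalgebra of the same rank $k<n$, hence proper, so maximality forces $AM=M$. Then for $D\in J$ and $E\in M$, $a\in A$, the identity $[aE,D]=a[E,D]-D(a)E$ gives $D(a)E\in J$ modulo $J$ terms. So the set $\mathfrak a=\{a\in A \mid aM\subseteq J\}$ contains all $D(a)$, i.e. $D(A)A$-type elements, making $\mathfrak a$ nonzero. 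Checking that $\mathfrak a$ is $M$-invariant uses the same bracket identity.
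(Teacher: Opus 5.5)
Your part (1) is the paper's argument: a point $p\notin V(I)$ and the derivations $F\,\partial/\partial x_j$ force $N_I=M$ to have rank $n$. For part (2) you, like the paper, ultimately rest on Jordan's theorem \cite{J1}. Your observation that maximality forces $AM=M$ is exactly the $A$-submodule hypothesis that theorem needs, and the paper leaves it implicit. With it you can cite \cite{J1}, Theorem 3 directly, as the paper does: in characteristic zero, a nonzero Lie subalgebra of ${\rm Der}(A)$ that is an $A$-submodule and leaves no proper nonzero ideal of $A$ invariant is simple. You do not need your guessed intermediate statement. As you formulate it for arbitrary subalgebras $L$, that statement is false: if $L\neq 0$ is finite dimensional, no nonzero ideal $\mathfrak a$ of $A$ satisfies $\mathfrak a L\subseteq L$.

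Your self-contained fallback, however, has a real gap. Take $aE\in M$ and $D\in J$. The identity $[aE,D]=a[E,D]-D(a)E$ only gives $D(a)E\equiv a[E,D] \pmod J$. The term $a[E,D]$ need not lie in $J$, because $J$ is a Lie ideal of $M$ and not an $A$-submodule. The $M$-invariance of $\mathfrak a=\{a\in A\mid aM\subseteq J\}$ is indeed the easy computation the paper uses in Theorems \ref{th2} and \ref{th3a}, but there one starts from $f\in\mathfrak a$. Showing $\mathfrak a\neq 0$ for an arbitrary nonzero Lie ideal $J$ is precisely the nontrivial content of Jordan's theorem, and your identity does not prove it.

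Finally, your argument that $M$ is not abelian is garbled, since $KD+K[D,D']$ need not be a subalgebra. It is enough to take $0\neq D\in M$ with $D(x_i)\neq 0$. Then $x_iD\in AM=M$ and $[D,x_iD]=D(x_i)D\neq 0$.
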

	
	\begin{proof}
		(1) Assume,  to the contrary, that there exists a nonzero ideal $I$ of the ring $A$, $I \neq A$ such that $M(I) \subseteq I$. Denote by $M_1$ the Lie subalgebra of the form
		\[
		M_1 = \{ D \in W_n \mid D(I) \subseteq I \}.
		\]
		Obviously $M \subseteq M_1$. Note that $M_1 \neq W_n$ because there exist no proper nonzero ideals $I$ such that $W_n(I) \subseteq I$. It follows from the maximality of the subalgebra $M$ that $M = M_1$. Fix any point $p \in K^n \setminus V(I)$, where $V(I)$ is the affine algebraic variety with the defining ideal $I$.
		By Lemma 1, for any vector $\vec{u} = (u_1, \dots, u_n) \in K^n$ there exists a derivation $D \in M_1$, such that $v_D(p)=\vec{u}.  $ The latter means that 
		$\{ v_D(p) \mid D \in M_1 \} = K^n$. On the other hand, $M$ is of rank $k < n$ over $A$, so the vector space $\{ v_D(p) \mid D \in M \}$ is obviously of dimension $\le k$.		
		The obtained contradiction proves the  part (1) of the theorem.
		
		(2) Since the subalgebra $M$ admits  no proper non-zero invariant ideals of the polynomial ring $A$ we have by \cite{J1},  Theorem 3, that  $M$ is a simple Lie algebra.
	\end{proof}

	\begin{lemma}\label{lemma1}
		Let $L \subseteq W_n(K)$ be a nonzero subalgebra and let $D_1, D_2 \in L$ be such that $AD_1+AD_2 \subseteq L$. Denote by $S$ the subalgebra of $W_n$ generated by $D_1, D_2$, i.e., $S = \langle D_1, D_2 \rangle$. Then $AS \subseteq L$.
	\end{lemma}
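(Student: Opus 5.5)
We have to show that $A$ times any Lie word in $D_1, D_2$ lies in $L$. The plan is to exploit the identity
\[
[aD,\,bE] = ab[D,E] + aD(b)E - bE(a)D, \qquad a,b\in A,\ D,E\in W_n,
\]
which lets one move polynomial coefficients across a bracket at the cost of lower-order terms. The strategy is induction on the length of Lie monomials.

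First, $S$ is spanned over $K$ by the left-normed commutators $[D_{i_1},[D_{i_2},\dots,D_{i_m}]\dots]$ with $i_j\in\{1,2\}$. It therefore suffices to show that $A\,w\subseteq L$ for every such commutator $w$, since $AS$ is the $K$-span of the sets $Aw$. We prove this by induction on the length $m$. The case $m=1$ is the hypothesis $AD_1+AD_2\subseteq L$.

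For the induction step, write $w=[D_i,w']$ with $w'$ of length $m-1$, so that $Aw'\subseteq L$ by the induction hypothesis. Take $a\in A$. Since $aD_i\in L$ and $w'\in L$ (as $1\in A$), we have $[aD_i,w']\in L$. The identity above with $b=1$ gives
\[
[aD_i,w'] = a[D_i,w'] - w'(a)D_i .
\]
Here $w'(a)D_i\in AD_i\subseteq L$, and hence $a[D_i,w']=aw\in L$. So $Aw\subseteq L$, which closes the induction. Alternatively, one can use $[D_i,aw']=a[D_i,w']+D_i(a)w'$, where $aw'\in L$ by induction and $D_i(a)w'\in Aw'\subseteq L$; this gives the same conclusion. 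Either expansion suffices, and neither needs anything beyond the Leibniz rule.

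The only point needing care is the reduction to left-normed monomials. This follows from the Jacobi identity: the span of left-normed commutators in the generators is closed under bracketing, so it equals the subalgebra they generate. This is standard, and I expect no real obstacle there. The main subtlety is simply to choose the bracket expansion so that the correction term lands in a set already known to lie in $L$. Both $AD_i$ and $Aw'$ qualify, so the argument goes through.
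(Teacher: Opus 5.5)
Your proof is correct and is essentially the paper's argument: both induct on bracket depth and use the Leibniz identity $f[T_1,T_2]=[fT_1,T_2]+T_2(f)T_1$ to move the polynomial coefficient inside the bracket, with a correction term already known to lie in $L$. The only difference is bookkeeping: the paper inducts over the filtration $S_1=\mathrm{span}\{D_1,D_2\}$, $S_{i+1}=S_i+[S_i,S_i]$, bracketing arbitrary $T_1,T_2\in S_{i-1}$, which avoids your appeal to the Jacobi identity to reduce to nested commutators $[D_i,w']$.
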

	
	\begin{proof}
		One can easily check that $S = \bigcup_{i=1}^\infty S_i$ where $S_i$ can be built recursively
		$S_1 = \text{span} \{ D_1, D_2 \}$, $S_{i+1} = S_i + [S_i, S_i]$.
		Let us show by induction on $i$ that $AS_i \subseteq L$ for $i \geq 1$. Indeed, $AS_1 = AD_1+AD_2 \subseteq L$ by the conditions of the lemma. Assume that $AS_{i-1} \subseteq L$ and show that $AS_i \subseteq L$. Since $S_i = S_{i-1} + [S_{i-1}, S_{i-1}]$ it is sufficient to prove that $A[S_{i-1}, S_{i-1}] \subseteq L$. Take any elements $T_1, T_2 \in S_{i-1}$. Then, for any polynomial $f \in A$, we have the equality
		\[ f [T_1, T_2] = [f T_1, T_2] + T_2(f) T_1 \]
		By the inductive assumption $T_2(f) T_1 \in AS_{i-1}\subseteq L$ and $[f T_1, T_2] \in [AS_{i-1}, S_{i-1}] \subseteq L$. The latter means that  $f [T_1, T_2] \in L$. It follows from this inclusion that $A[S_{i-1}, S_{i-1}] \subseteq L.$ Therefore $AS_i \subseteq L$ and $AS\subseteq L.$ 
		The proof is complete.
	\end{proof}
	
	\begin{lemma}\label{lemma2}
		Let $D$ be a simple derivation of the polynomial ring $A=K[x_1, \dots, x_n]$. Then $AD$ is a maximal subalgebra in the partially ordered set (relative to inclusion) of all subalgebras of rank 1 in $W_n(K)$.
	\end{lemma}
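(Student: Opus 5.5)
The plan is to show directly that any subalgebra $L$ of rank $1$ with $AD\subseteq L$ coincides with $AD$. In fact this will only use the rank condition together with one consequence of simplicity, namely that the coefficients of $D$ have no common non-constant factor. The Lie bracket structure of $L$ plays no role, and Lemma \ref{lemma1} will not be needed.

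\emph{Step 1: $D$ is primitive.} Write $D=\sum_{i=1}^n f_i\frac{\partial}{\partial x_i}$, where $D\neq 0$ since the zero derivation is not simple. Suppose $c=\gcd(f_1,\dots,f_n)$ is not a constant, and write $D=cD'$ with $D'\in W_n$. Then for every $a\in A$ we have $D(ca)=cD'(ca)\in cA$. Hence the ideal $cA$ is a proper nonzero $D$-invariant ideal of $A$. This contradicts the simplicity of $D$, so $\gcd(f_1,\dots,f_n)=1$.

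\emph{Step 2: elements of $L$ are rational multiples of $D$.} Let $L\supseteq AD$ be a subalgebra with ${\rm rk}_A L=1$, i.e.\ $\dim_R RL=1$. Since $0\ne D\in L$, we get $RL=RD$. Hence every $E\in L$ can be written as $E=rD$ with $r\in R$. Write $r=g/h$ with $g,h\in A$ coprime and $h\neq 0$.

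\emph{Step 3: the multiplier is a polynomial.} Since $E\in W_n$, each coefficient $rf_i=gf_i/h$ lies in $A$. Thus $h$ divides $gf_i$ for all $i$. As $\gcd(g,h)=1$, it follows that $h$ divides each $f_i$, and therefore $h$ divides $\gcd(f_1,\dots,f_n)=1$ by Step 1. So $h\in K^{*}$, hence $r\in A$ and $E\in AD$. This gives $L=AD$, so $AD$ is maximal among subalgebras of rank $1$.

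The only point needing any care is Step 1: one must check that $D=cD'$ really makes $cA$ invariant. This holds because $D(ca)=c\,D'(ca)$. The rest is unique factorization in $A$.
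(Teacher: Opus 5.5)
Your proof is correct and follows essentially the same route as the paper: simplicity of $D$ forces the coefficients of $D$ to be coprime, and unique factorization then shows that any element of a rank-one subalgebra containing $AD$ is a polynomial multiple of $D$, not merely a rational one. The differences are cosmetic. The paper obtains coprimality from the $D$-invariance of the ideal $(a_1,\dots,a_n)$, which in fact forces that ideal to be $A$, while you use the principal ideal generated by the gcd. The paper also writes the rank-one dependence as $cD+dT=0$ with coprime $c,d\in A$, where you write $E=rD$ with $r\in R$.
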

	
	\begin{proof} Write $D$ in the coordinate form
		$$	
		D = \sum_{i=1}^{n}a_i \, \frac{\partial}{\partial x_i}, \quad a_i=a_i(x_1, \ldots , x_n)\in A.
		$$	
		Note that the ideal $I=(a_1, \ldots, a_n)$ of the ring $A$ is $D$-invariant, i.e., $
		D(I) \subseteq I.$
		Since $D$ is simple and $D \neq 0$ we have that $I=A$. The latter means that
		$\gcd(a_1, \dots, a_n) = 1. $ 		Assume, to the contrary, that $AD$ is strictly contained in a subalgebra $M$ of  $\rm{rank} = 1$ from $W_n$. Take any $T \in M \setminus AD$, and write
		\[
		T = \sum_{i=1}^{n} b_i \,  \frac{\partial}{\partial x_i}, \quad b_i \in A.
		\]
		Since $\rm{rank}(M) = 1$ there exist $c, d \in A$ such that
		$cD + dT = 0.$ 		Without loss of generality one can assume that $\gcd(c,d) = 1$. But then $ca_i = db_i, \ i = 1, \dots,n$. Since $\gcd(a_1,\dots,a_n)=1$
		we get $d \in \mathbb{K}^*$. It follows from the last equality that $b_i = d^{-1}ca_i, \ i = 1,\dots,n$, i.e., $T = d^{-1}cD \in AD$. The latter contradicts our choice of $T$ and therefore $AD$ is maximal among subalgebras of $\rm{rank}$ 1 in $W_n(K)$. The proof is complete.
	\end{proof}
	The next statement seems to be known, but having no precise reference we supply it with a proof.	
	\begin{lemma}\label{invariant}
		Let $D_1, D_2 \in W_n(\mathbb{K})$ be  linearly independent over $A$ derivations such that    
		$[D_1, D_2] = \mu_1 D_1 + \mu_2 D_2$ for some polynomials $\mu_1, \mu_2 \in A.$ Denote 
		$\Delta_{ij} = \begin{vmatrix} D_1(x_i) & D_1(x_j) \\ D_2(x_i) & D_2(x_j) \end{vmatrix}$.
		Then the next equalities hold 
		$$
		D_1(\Delta_{ij}) = \mu_2 \Delta_{ij} + \sum_{k=1}^n \left(\frac{\partial}{\partial x_k} D_1(x_i)\right) \Delta_{kj} - \sum_{k=1}^n \left(\frac{\partial}{\partial x_k} D_1(x_j)\right) \Delta_{ki},	$$
		$$ 	D_2(\Delta_{ij}) = -\mu_1 \Delta_{ij} + \sum_{k=1}^n \left( \frac{\partial}{\partial x_k} D_2(x_i) \right) \Delta_{kj} - \sum_{k=1}^n \left( \frac{\partial}{\partial x_k} D_2(x_j) \right) \Delta_{ki}. 	$$
		Therefore  the ideal $I$ of the polynomial ring $A$ generated by the determinants $\Delta_{ij}$, $i,j=1,\dots,n$ is invariant under action of the derivations $D_1, D_2$.
	\end{lemma}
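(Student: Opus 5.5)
The plan is a direct computation in coordinates, with the only input being the relation $[D_1,D_2]=\mu_1D_1+\mu_2D_2$ evaluated on the variables $x_i$. Put $a_i=D_1(x_i)$ and $b_i=D_2(x_i)$, so that $\Delta_{ij}=a_ib_j-a_jb_i$, $D_1=\sum_k a_k\frac{\partial}{\partial x_k}$ and $D_2=\sum_k b_k\frac{\partial}{\partial x_k}$. Applying both sides of the commutator relation to $x_i$ gives the key identity
$$D_1(b_i)-D_2(a_i)=\mu_1a_i+\mu_2b_i,\qquad i=1,\dots,n.$$
Everything else is the Leibniz rule together with the expansions $D_1(a_i)=\sum_k a_k\frac{\partial a_i}{\partial x_k}$ and $D_2(a_j)=\sum_k b_k\frac{\partial a_j}{\partial x_k}$.

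For the first formula, expand by Leibniz:
$$D_1(\Delta_{ij})=D_1(a_i)b_j-a_jD_1(b_i)+a_iD_1(b_j)-D_1(a_j)b_i.$$
In the two terms $a_iD_1(b_j)-a_jD_1(b_i)$, replace $D_1(b_\ell)$ using the key identity. The $\mu_1$-contributions are $\mu_1(a_ia_j-a_ja_i)$ and cancel. The $\mu_2$-contributions give exactly $\mu_2\Delta_{ij}$. What remains is
$$\bigl(D_1(a_i)b_j-a_jD_2(a_i)\bigr)+\bigl(a_iD_2(a_j)-D_1(a_j)b_i\bigr).$$

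Now substitute the coordinate expansions of $D_1(a_i)$ and $D_2(a_i)$. The first bracket becomes $\sum_k\frac{\partial a_i}{\partial x_k}(a_kb_j-a_jb_k)=\sum_k\frac{\partial a_i}{\partial x_k}\Delta_{kj}$. The second becomes $\sum_k\frac{\partial a_j}{\partial x_k}(a_ib_k-a_kb_i)=-\sum_k\frac{\partial a_j}{\partial x_k}\Delta_{ki}$. This is exactly the claimed formula. The second formula follows by the same computation with the roles of $D_1$ and $D_2$ exchanged. Alternatively, note that swapping $D_1,D_2$ changes $\Delta_{ij}$ to $-\Delta_{ij}$ and the relation to $[D_2,D_1]=-\mu_2D_2-\mu_1D_1$, which produces the sign $-\mu_1$ and the stated sums.

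The invariance of $I=(\Delta_{ij})$ is then immediate. Both right-hand sides are $A$-linear combinations of the generators $\Delta_{kl}$, so $D_s(\Delta_{ij})\in I$ for $s=1,2$. By the Leibniz rule, $D_s(f\Delta_{ij})=D_s(f)\Delta_{ij}+fD_s(\Delta_{ij})\in I$ for every $f\in A$, hence $D_s(I)\subseteq I$. Linear independence of $D_1,D_2$ over $A$ is used only to guarantee that some $\Delta_{ij}\ne0$, so that $I\neq0$.

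I expect no real obstacle; the only delicate point is the bookkeeping. One must pair the terms correctly, namely $D_1(a_i)b_j$ with $a_jD_2(a_i)$ rather than with $a_jD_1(b_i)$, so that they collapse into $\Delta_{kj}$ and $\Delta_{ki}$ with the correct signs.
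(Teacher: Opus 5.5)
Your proof is correct and is essentially the paper's computation. Both arguments apply the commutator relation to $x_i$ to replace $D_1D_2(x_\ell)$ by $D_2D_1(x_\ell)+\mu_1a_\ell+\mu_2b_\ell$, and then expand $D_1(a_i)$ and $D_2(a_i)$ in coordinates so the terms regroup into $\Delta_{kj}$ and $\Delta_{ki}$. Your symmetry derivation of the second formula and your explicit Leibniz check that $D_s(I)\subseteq I$ fill in steps the paper dismisses as ``similar'' or leaves implicit.
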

	
	\begin{proof} We prove only the first equality, the proof of the second is similar. We obviously have
		$$D_1(\Delta_{ij}) = \begin{vmatrix} D_1^2(x_i) & D_1^2(x_j) \\ D_2(x_i) & D_2(x_j) \end{vmatrix} + \begin{vmatrix} D_1(x_i) & D_1(x_j) \\ D_1 D_2(x_i) & D_1 D_2(x_j) \end{vmatrix}=$$
		
		$$= \begin{vmatrix} D_1^2(x_i) & D_1^2(x_j) \\ D_2(x_i) & D_2(x_j) \end{vmatrix} + \begin{vmatrix} D_1(x_i) & D_1(x_j) \\ [D_1, D_2](x_i) & [D_1, D_2](x_j) \end{vmatrix} + \begin{vmatrix} D_1(x_i) & D_1(x_j) \\ D_2 D_1(x_i) & D_2 D_1(x_j) \end{vmatrix}.$$
		
		Taking into account the equality $[D_1, D_2] = \mu_1 D_1 + \mu_2 D_2  $ we obtain the expression for $D_1(\Delta_{ij})$ of the form
		
		$$ \begin{vmatrix} D_1^2(x_i) & D_1^2(x_j) \\ D_2(x_i) & D_2(x_j) \end{vmatrix} + \begin{vmatrix} D_1(x_i) & D_1(x_j) \\ \mu_1 D_1(x_i) + \mu_2 D_2(x_i) & \mu_1 D_1(x_j) + \mu_2 D_2(x_j) \end{vmatrix} + \begin{vmatrix} D_1(x_i) & D_1(x_j) \\ D_2 D_1(x_i) & D_2 D_1(x_j) \end{vmatrix}.$$
		Therefore 
		$$D_1(\Delta_{ij})=\mu_2 \Delta_{ij} + D_1^2(x_i) D_2(x_j) - D_1^2(x_j) D_2(x_i) + (D_2 D_1(x_j)) D_1(x_i) - (D_2 D_1(x_i)) D_1(x_j).$$
		In view of the expression $D=\sum _{k=1}^nD(x_k)\frac{\partial}{\partial x_k}$ for every $D\in W_n,$ we can write the right side of the last equality in the form
		$$ \mu_2 \Delta_{ij} + \left( \sum_{k=1}^n \left( \frac{\partial}{\partial x_k} D_1(x_i) \right) D_1(x_k) \right) D_2(x_j) - \left( \sum_{k=1}^n \left( \frac{\partial}{\partial x_k} D_1(x_j) \right) D_1(x_k) \right) D_2(x_i)+$$
		$$\quad + \left( \sum_{k=1}^n \left( \frac{\partial}{\partial x_k} D_1(x_j) \right) D_2(x_k) \right) D_1(x_i) - \left( \sum_{k=1}^n \left( \frac{\partial}{\partial x_k} D_1(x_i) \right) D_2(x_k) \right) D_1(x_j).$$
		After straightforward simplifications, we obtain the desired equality	
		$$D_1(\Delta_{ij})= \mu_2 \Delta_{ij} + \sum_{k=1}^n \left( \frac{\partial}{\partial x_k} D_1(x_i) \right) \Delta_{kj} - \sum_{k=1}^n \left( \frac{\partial}{\partial x_k} D_1(x_j) \right) \Delta_{ki}.$$
	\end{proof}
	
	\begin{corollary}\label{delta}
		Let $D_1, D_2 \in W_2(\mathbb{K})$ be linearly independent over~$\mathbb{K}[x_1,x_2]$ derivations such that  
		$[D_1, D_2] = \mu_1 D_1 + \mu_2 D_2$ for some polynomials $\mu_1, \mu_2\in \mathbb{K}[x_1,x_2].$ Denote  $\Delta_{12} = \begin{vmatrix} D_1(x_1) & D_1(x_2) \\ D_2(x_1) & D_2(x_2) \end{vmatrix}$.
		Then $
		D_1(\Delta_{12}) = (\mu_2 + {\rm div}D_1) \Delta_{12} ,	$
		$	D_2(\Delta_{12}) = (-\mu_1 + {\rm div}D_2) \Delta_{12}.	$
		Thus, the determinant~$\Delta_{12}$ is a common Darboux polynomial of the derivations $D_1, D_2$.
	\end{corollary}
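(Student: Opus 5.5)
The plan is to specialize Lemma~\ref{invariant} to $n=2$. There the only nontrivial determinants are $\Delta_{12}$ and $\Delta_{21}=-\Delta_{12}$, while $\Delta_{11}=\Delta_{22}=0$. The two sums in the formula for $D_1(\Delta_{ij})$ then collapse to multiples of $\Delta_{12}$, and the coefficient that emerges should be exactly the divergence.

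First, apply the first equality of Lemma~\ref{invariant} with $i=1$, $j=2$:
$$D_1(\Delta_{12})=\mu_2\Delta_{12}+\sum_{k=1}^2\frac{\partial D_1(x_1)}{\partial x_k}\Delta_{k2}-\sum_{k=1}^2\frac{\partial D_1(x_2)}{\partial x_k}\Delta_{k1}.$$
In the first sum only $k=1$ survives, contributing $\frac{\partial D_1(x_1)}{\partial x_1}\Delta_{12}$. In the second sum only $k=2$ survives, contributing $-\frac{\partial D_1(x_2)}{\partial x_2}\Delta_{21}=\frac{\partial D_1(x_2)}{\partial x_2}\Delta_{12}$. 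Adding these gives
$$D_1(\Delta_{12})=\Bigl(\mu_2+\tfrac{\partial D_1(x_1)}{\partial x_1}+\tfrac{\partial D_1(x_2)}{\partial x_2}\Bigr)\Delta_{12}=(\mu_2+{\rm div}D_1)\Delta_{12},$$
since writing $D_1=D_1(x_1)\frac{\partial}{\partial x_1}+D_1(x_2)\frac{\partial}{\partial x_2}$ shows the bracketed sum of partial derivatives is ${\rm div}D_1$.

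Next, use the second equality of Lemma~\ref{invariant} in the same way, which yields $D_2(\Delta_{12})=(-\mu_1+{\rm div}D_2)\Delta_{12}$. Finally, linear independence of $D_1,D_2$ over $A$ means $\Delta_{12}\neq 0$, so $\Delta_{12}$ is a genuine common Darboux polynomial, with cofactors $\mu_2+{\rm div}D_1$ and $-\mu_1+{\rm div}D_2$.

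No step is a real obstacle. The only point needing care is the sign bookkeeping, namely using $\Delta_{21}=-\Delta_{12}$ and the minus sign in front of the second sum.
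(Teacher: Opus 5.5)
Your proposal is correct and follows the paper's intended route: the paper states the corollary without a separate proof, as the $n=2$ case of Lemma~\ref{invariant}, and your bookkeeping with $\Delta_{11}=\Delta_{22}=0$ and $\Delta_{21}=-\Delta_{12}$ correctly gives the cofactors $\mu_2+{\rm div}D_1$ and $-\mu_1+{\rm div}D_2$. You also note that $\Delta_{12}\neq 0$ because $D_1,D_2$ are linearly independent over $A$, which the paper leaves implicit and which is needed to call $\Delta_{12}$ a Darboux polynomial.
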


	\begin{lemma}\label{lemma3}
		Let $D \in W_n, \ n\geq2$ be a simple derivation and  $T\in W_n$ be a derivation such that  $[T, D] = hD$ for a polynomial $h \in A$. Then either $T = \mu D$ for a polynomial $\mu \in A$ or there exist $D_3, \ldots, D_n \in W_n$ such that $D, T, D_3, \ldots, D_n$ form a basis of the free $A$-module $W_n$.
	\end{lemma}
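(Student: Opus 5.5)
The plan is to split according to whether $T$ and $D$ are linearly dependent over $A$.

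\emph{Dependent case.} Suppose $cT = dD$ for some $c,d\in A$, not both zero; then $c\neq 0$ since $D\neq 0$. Dividing by a common factor, we may assume $\gcd(c,d)=1$. Write $D=\sum a_i\frac{\partial}{\partial x_i}$ and $T=\sum b_i\frac{\partial}{\partial x_i}$. As in the proof of Lemma \ref{lemma2}, simplicity of $D$ forces $\gcd(a_1,\dots,a_n)=1$, because the ideal $(a_1,\dots,a_n)$ is $D$-invariant. From $cb_i = da_i$ we get $c\mid da_i$, hence $c\mid a_i$ for all $i$ since $\gcd(c,d)=1$. Therefore $c\in K^*$ and $T=c^{-1}dD\in AD$. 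This gives the first alternative; the hypothesis $[T,D]=hD$ is not even needed here.

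\emph{Independent case.} Put $D_1=D$ and $D_2=T$. Then $[D_1,D_2]=-hD_1$, so Lemma \ref{invariant} applies with $\mu_1=-h$ and $\mu_2=0$. It shows that the ideal $I$ generated by the minors $\Delta_{ij}$ of the $2\times n$ matrix with rows $(D(x_1),\dots,D(x_n))$ and $(T(x_1),\dots,T(x_n))$ is $D$-invariant. Since $D,T$ are $A$-independent, some $\Delta_{ij}\neq 0$, so $I\neq 0$. Simplicity of $D$ then gives $I=A$, i.e. $1=\sum_{i<j} c_{ij}\Delta_{ij}$ for some $c_{ij}\in A$.

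Next we deduce that $N=AD+AT$ is a free direct summand of $W_n\cong A^n$. For this, take the matrix with rows $D,T$ and let $\Phi\colon A^2\to A^n$ be the map $(f,g)\mapsto fD+gT$. The unit-ideal condition on the $2\times 2$ minors says that $\Phi$ has rank $2$ modulo every maximal ideal of $A$, so $\Phi$ is split injective. Alternatively, we can write down a left inverse $\Psi\colon A^n\to A^2$ explicitly from the identity $\sum c_{ij}\Delta_{ij}=1$: the Cauchy–Binet type formula gives $\Psi\circ\Phi=\mathrm{id}$ when $\Psi$ is built from the $c_{ij}$ and the cofactor columns. Hence
$$W_n = N\oplus P, \qquad P=\ker\Psi,$$
where $P$ is a finitely generated projective $A$-module with $P\oplus A^2\cong A^n$, i.e. $P$ is stably free of rank $n-2$.

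By the Quillen–Suslin theorem, $P$ is free, say with basis $D_3,\dots,D_n$; when $n=2$, $P=0$. Then $D,T,D_3,\dots,D_n$ is a basis of $W_n$, which is the second alternative.

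The main obstacle is this last part: passing from ``the $2\times2$ minors generate $A$'' to ``$\{D,T\}$ extends to a basis.'' This needs both the splitting argument and the appeal to Quillen–Suslin. Of these, the splitting is the step I would check most carefully, by verifying the explicit left inverse.
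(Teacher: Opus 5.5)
Your proof is correct and follows essentially the same route as the paper. The ideal of $2\times 2$ minors is $D$-invariant by Lemma \ref{invariant}, so simplicity forces it to be $0$ or $A$; the zero case is settled by $\gcd(a_1,\dots,a_n)=1$, and the unit-ideal case is completed to a basis via Quillen--Suslin. The only differences are that you split first on $A$-dependence of $D,T$, and that you make explicit the splitting $W_n=(AD+AT)\oplus P$, which the paper leaves inside its citation of ``a generalization of the Quillen--Suslin theorem.''
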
 
	
	\begin{proof}
		Write $D$ and $T$ in the coordinate form: 
		$$D = \sum_{i=1}^{n} a_i \, \frac{\partial}{\partial x_i}, \  T = \sum_{i=1}^{n} b_i \, \frac{\partial}{\partial x_i}, \ a_i, b_i \in A.$$
		Consider the $2 \times n$-matrix 
		$$
		G = \begin{pmatrix}
			a_1 & \dots & a_n \\
			b_1 & \dots & b_n 
		\end{pmatrix}.
		$$
		Denote by $\Delta$ the ideal of the polynomial ring $A$ generated by all minors of order $2$ of the matrix $G$. Since $[T, D] = hD$ it follows from Lemma \ref{invariant} that $D(\Delta) \subseteq \Delta$. By our assumption $D$ is a simple derivation, so we have $\Delta = 0$ or $\Delta =A$. In case $\Delta =0$,  we obtain, taking into account the equality $\gcd(a_1, \ldots, a_n) = 1,$ that $T = \mu D$ for some $\mu \in A$. Let $\Delta =A$. Then, by a generalization of the Quillen-Suslin theorem (see, for example, \cite{GQS}), there exist  derivations $D_3, \ldots, D_n \in W_n$, such that $D, T, D_3, \ldots, D_n$ is a basis of the free $A$-module $W_n$. The proof is complete.
	\end{proof}

	\begin{theorem}\label{th2}
		Let $D$ be a simple derivation in $W_{2}={\rm{Der}}(A),$ where  $ A=K[x, y]$. Then $AD$ is a maximal subalgebra of the Lie algebra $W_{2}$. Conversely, every maximal subalgebra of $\rm{rank}$ 1 of the Lie algebra $W_{2}$ is of  the form $M =AD$ for some simple derivation $D$ on the polynomial ring $A$.
	\end{theorem}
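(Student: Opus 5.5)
We prove the two directions separately.

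\emph{Direct part.} Let $D$ be simple and suppose $AD\subsetneq M\subseteq W_2$ for a subalgebra $M$. We show $M=W_2$. By Lemma \ref{lemma2}, $M$ cannot have rank $1$, so $\operatorname{rk}M=2$. Since $AD\subseteq M$, for every $T\in M$ and $f\in A$ we have $[T,fD]=T(f)D+f[T,D]\in M$. Hence $f[T,D]\in M$ for all $f$, so $A[T,D]\subseteq M$.

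The key step is to find $T\in M$ such that $D,T$ form an $A$-basis of $W_2$. For $T\in M$ consider $\Delta(T)=\det\begin{pmatrix}a_1&a_2\\ b_1&b_2\end{pmatrix}$, where $D=a_1\partial_x+a_2\partial_y$ and $T=b_1\partial_x+b_2\partial_y$. The set $J=\{\Delta(T)\mid T\in M\}$ is a $K$-subspace of $A$, and it is nonzero because $\operatorname{rk}M=2$. I claim $J$ is an ideal and is $D$-invariant.
\begin{itemize}
\item $J$ is closed under multiplication by $A$ provided $AT\subseteq M$. This need not hold for $M$ itself, so we first replace $M$ by a subalgebra that is an $A$-module.
\item Put $N=\{T\in M\mid AT\subseteq M\}$. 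Then $N\supseteq AD$.
\item $N$ is closed under brackets: Lemma \ref{lemma1} applied to $T_1,T_2\in N$ gives $A\langle T_1,T_2\rangle\subseteq M$.
\item $N\ne AD$: for $T\in M\setminus AD$ we have $A[T,D]\subseteq M$, so $[T,D]\in N$. If $[T,D]\in AD$ for every such $T$, then $[T,D]=hD$ and Lemma \ref{lemma3} (with $n=2$) gives that $D,T$ already form a basis. Otherwise $N\not\subseteq AD$, and by Lemma \ref{lemma2} the polynomial subalgebra $N$ has rank $2$.
\end{itemize}

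Now take the ideal $J=\{\Delta(T)\mid T\in N\}$, which is an ideal since $N$ is an $A$-module. It is $D$-invariant: for $T\in N$, $[D,T]\in N$, and a direct computation (as in Lemma \ref{invariant}) gives
$$D(\Delta(T))=\Delta([D,T])+(\operatorname{div}D)\Delta(T).$$
Since $J\ne0$ and $D$ is simple, $J=A$. So there is $T\in N$ with $\Delta(T)=1$, and then $D,T$ is an $A$-basis of $W_2$ contained in $N$. Therefore $W_2=AD+AT\subseteq N\subseteq M$, as required.

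I expect verifying the divergence identity, and tidying up the case where $[T,D]\in AD$, to be the main technical obstacle. In that case $\Delta(T)$ is a Darboux polynomial of $D$ by Corollary \ref{delta}, hence a nonzero constant by simplicity, so $D,T$ is a basis and $AT\subseteq M$ must still be shown. To get it, replace $T$ by $T'=[T,fD]$ for suitable $f$. Since $\Delta$ is $A$-linear in its second argument,
$$\Delta(T')=T(f)\Delta(D)+f\Delta([T,D])=f\Delta(hD)=0\quad\text{since }\Delta(D)=0.$$
So that route fails. Instead, use $A[T,D]\subseteq M$ together with Lemma \ref{lemma1} on $T$ and $fD$ to generate enough. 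If necessary, pass to the smallest polynomial subalgebra $AM$ and argue via Theorem \ref{th1}-style invariant ideals.

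\emph{Converse.} Let $M$ be maximal of rank $1<2$. By Theorem \ref{th1}(1), $A$ has no proper nonzero $M$-invariant ideals. Since $AM$ is a subalgebra containing $M$, either $AM=M$ or $AM=W_2$. The latter is impossible because $\operatorname{rk}AM=\operatorname{rk}M=1$. So $M$ is an $A$-submodule of rank $1$ of $W_2$.

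Pick $D_0\in M$ and write $D_0=gD$ with $D=a_1\partial_x+a_2\partial_y$ and $\gcd(a_1,a_2)=1$. Every element of $M$ is $R$-proportional to $D$, hence of the form $uD$ with $u\in A$ by the gcd condition, so $M=\mathfrak{a}D$ for an ideal $\mathfrak{a}\subseteq A$. Then $AD=A\cdot D$ is a subalgebra of rank 1 containing $M$, so maximality forces $M=AD$.

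Finally, $D$ is simple. If $I$ is a proper nonzero $D$-invariant ideal, then $I$ is invariant under $fD$ for all $f\in A$, i.e. $M$-invariant, contradicting Theorem \ref{th1}(1).
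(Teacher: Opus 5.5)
Your converse is correct: invoking Theorem~\ref{th1}(1) to get simplicity of $D$ is a fine substitute for the paper's $JW_2+M$ argument. The branch of the direct part in which some $T\in M\setminus AD$ has $[T,D]\notin AD$ is also correct. There, the polynomial subalgebra $N$, the ideal $J=\Delta(N)$ and your identity $D(\Delta(T))=\Delta([D,T])+(\operatorname{div}D)\Delta(T)$ (which checks out) give a valid alternative to the paper's ideal $\{f\in A\mid fW_2\subseteq M\}$.

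The gap is the other branch, where $[T,D]=hD$ for every $T\in M\setminus AD$. You leave it open, and trying to prove $AT\subseteq M$ cannot succeed. Since $[T,gD]=(T(g)+gh)D$, the space $AD+KT$ is already a subalgebra. It lies strictly between $AD$ and $W_2$, because $D,T$ is a basis and so $xT\notin AD+KT$. Hence no bracket-generation from $T$ and $AD$ can produce $AT$. Your fallbacks do not help either: applying Lemma~\ref{lemma1} to $T$ and $fD$ presupposes $AT\subseteq M$, $A[T,D]\subseteq AD$ adds nothing, and $AM=W_2$ says nothing about $M$.

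The missing idea is that this branch contradicts the simplicity of $D$. You already know $\Delta(T)\in K^{*}$, and $\Delta([D,T])=-h\Delta(D)=0$. So your own identity yields $0=D(\Delta(T))=(\operatorname{div}D)\Delta(T)$, i.e.\ $\operatorname{div}D=0$. In two variables and characteristic zero, a divergence-free derivation is Jacobian: $D=D_f$ for some $f\in A$, and $f$ is nonconstant because $D\neq 0$. Since $D_f(f)=0$, the ideal $(f)$ is a proper nonzero $D$-invariant ideal, a contradiction. So the second branch never occurs, and your first branch completes the proof. This is exactly how the paper proceeds: it uses Corollary~\ref{delta} and this Jacobian argument to show that $D$ and $[T,D]$ are $A$-independent for any $T$ independent of $D$.
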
 
	
	\begin{proof}
		Let $D = P \frac{\partial}{\partial x} + Q \frac{\partial}{\partial y}$ be a simple derivation. Assume, to the contrary, that $AD$ is not a maximal subalgebra of the Lie algebra $W_{2}$. Then there exists a subalgebra $M$ of $W_{2}$ such that $AD \subsetneq M \subsetneq W_{2}$. By Lemma \ref{lemma2}, the subalgebra $M$ is of $\operatorname{rank}$ $2$. We first show   that  $M$ can be chosen to be  a polynomial subalgebra. Choose  an arbitrary  element $T \in M \setminus AD$ such that $D$ and $T$ are linearly independent over $A.$ Such an element  $T$  exists since $\rm{rk}(M) = 2$. Let us show that $D$ and $[T, D]$ are linearly independent over $A$. Assume, to the contrary, that this is not the case. Then $[T, D] = hD$ for a polynomial $h \in A$ (recall that $D$ is a simple derivation, so the polynomials $P$ and $Q$ are coprime).
		
		By Lemma \ref{lemma3}, the elements $D, T$ form a basis of the $A$-module $W_{2}$. Write $T$ in coordinate form $T = R\frac{\partial}{\partial x} + S\frac{\partial}{\partial y}$ for some polynomials $R, S\in A.$ Then 
		$$
		\Delta :=  \begin{vmatrix}
			P & Q \\ R & S
		\end{vmatrix} \in \mathbb{K}^*.$$
		Since $[T, D] = hD,$  Corollary \ref{delta} implies  that  $D(\Delta) = (\mathrm{div}D)\Delta = 0.$ 
		Thus $\mathrm{div}D = 0$. It follows that $D$ is a Jacobi derivation, $D=D_{f}$ for a nonconstant polynomial  $f \in A.$ But then the  principal ideal  $(f)$ of the ring $A$  is obviously invariant under the  action of $D,$ which	contradicts the assumptions of the theorem.		Therefore $D$ and $[T, D]$ are linearly independent over the ring $A$.
		
		Now our aim is to show that the polynomial subalgebra $A\langle D, [T, D] \rangle$ generated by $T, [T, D]$ is a  subalgebra of  $M$.		
		Choose any polynomial $g \in A$. Then
		\[
		g[T, D] = [T, gD] - T(g)D \in M
		\]
		since $gD \in M$ and $T(g)D \in M$.
		
		Therefore $A[T, D] \subseteq M$. Taking into account the inclusion $AD \subseteq M$ and Lemma \ref{lemma1}, we see that $A\langle D, [T, D] \rangle$ is a polynomial subalgebra of $M$. Without loss of generality we may assume that $M$ is a polynomial subalgebra of $W_2$. Since $D, T \in M$ are linearly independent over $A$ we see that
		$$
		\Delta := \begin{vmatrix}
			P & Q \\ R & S
		\end{vmatrix} \neq 0 .$$
		Solving the linear system 
		$$
		D = P\frac{\partial}{\partial x} + Q\frac{\partial}{\partial y}, \ 		
		T = R\frac{\partial}{\partial x} + S\frac{\partial}{\partial y}
		$$
		for $ \frac{\partial}{\partial x} , \frac{\partial}{\partial y} $   we obtain the equalities 
		$$\Delta \frac{\partial}{\partial x} = SD - QT, \ \Delta \frac{\partial}{\partial y} = -R D + PT.
		$$
		Recall that  $M$ is a polynomial subalgebra of $W_2$ and $D, T\in M$, so 
		$$\Delta \frac{\partial}{\partial x} \in M, \ \Delta \frac{\partial}{\partial y} \in M.$$
		The latter means that $\Delta W_2 \subseteq M$. One can easily show that the set $$I = \{ f \in A \mid f W_2 \subseteq M \}$$
		is an ideal of the ring $A.$ Since $\Delta \in I$ the ideal $I$ is nonzero.  Take any element $D_1 \in W_2$.
		Then, for any $f \in I$, we have
		$$[D,fD_1] = D(f)D_1 + f[D,D_1].$$
		As $f[D,D_1] \in M$ and $[D,fD_1] \in M$ we have $D(f)D_1 \in M$. The latter means that 	$D(f)W_2 \subseteq M,$  i.e., $D(f)\in I,$	and therefore $D(I) \subseteq I.$		By conditions of the theorem, $D$ is a simple derivation and by the above proven  $I \neq 0$, so $I=A$. But then $W_2 =AW_2 \subseteq M$ and therefore  $M = W_2$. The obtained contradiction shows that $AD$ is a maximal subalgebra of $W_2$.
		
		Now let $M$ be a maximal subalgebra of rank $1$ from $W_2$. It follows from the inclusion $M \subseteq AM$ that $M =AM$, i.e., $M$ is a polynomial subalgebra. Then $M = ID$ for an ideal $I$ of the ring $A$ and a reduced derivation $D \in W_2$ (see for example, \cite{AMP}, Lemma 1).  Since $M$ is a maximal subalgebra of $W_2$ we have $I =A$, i.e., $M =AD$. Let us show that $D$ is a simple derivation of $A$. 		Take any proper ideal $J \subsetneq A$ which is invariant under the action of the derivation $D$. Then $(J W_2 + M)(J) \subseteq J$ which implies the inequality $JW_2+M \neq W_2$. As $M$ is a maximal subalgebra of $W_2$ we obtain the equality $JW_2+M = M$. The latter is impossible because $JW_2$ is a subalgebra of rank $2$ from $W_2$. The obtained contradiction shows that $D$ is a simple derivation on $A$. The proof is complete.
	\end{proof} 
	
	\begin{example}
		Take the derivation $D=\frac{\partial}{\partial x}+ (1+xy)\frac{\partial}{\partial y}\in W_2.$ It is known that $D$ is a simple derivation on $A=K[x, y]$ (see, for example \cite{Now}). Then, by Theorem \ref{th2}, the Lie algebra $M=A(\frac{\partial}{\partial x}+(1+xy)\frac{\partial}{\partial y}) $ is a maximal subalgebra of the Lie algebra $W_2.$
		
	\end{example}

	\section{On  maximal subalgebras of rank $n$ of $W_n$}
	In this section,  we consider maximal subalgebras of $W_n$ of maximal rank. 
	Note that every finite-dimensional maximal subalgebra $M$  of $W_n$ is of rank $n.$ Indeed,  suppose, to the contrary, that $\textrm rk M=k<n.$ Then $AM$ is of rank $k$ and $\dim AM=\infty .$ Therefore $M\subsetneq AM$. Of course, $AM\not = W_n$ since these algebras have distinct ranks over $A.$ The latter contradicts maximality of $M$ and obtained contradiction shows that $ \textrm rk M=n.$ 
	We also  need   some results concerning the following  subalgebras of $W_n$, which were  studied	in \cite{Makedonskyi}.
	\begin{definition} (\cite{Makedonskyi}).
		A Lie subalgebra $L \subseteq W_n({K})$ with $\dim_K L = n$ is called a basic subalgebra of $W_n$  if the $A$-module $AL$ generated by $L$ coincides with $W_n,$ i.e.,  $AL=W_n(K).$
		%		\annotation{This is an interpretation of the fragmented definition in the draft,} \annotation{required to make sense of the proof.}
	\end{definition}
	
	\begin{proposition}[Corollary 1, \cite{Makedonskyi}] \label{prop:div_free}
		If $L$ is a nilpotent or semisimple basic subalgebra of the Lie algebra  $W_n(K)$, then $L$ consists of divergence-free derivations.
	\end{proposition}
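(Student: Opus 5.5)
The plan is to show that a basic subalgebra forces the divergence of each of its elements to equal minus the trace of its adjoint action on $L$. Both hypotheses, nilpotent or semisimple, make all these traces vanish.

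\textbf{Step 1 (a $K$-basis of $L$ is an $A$-basis of $W_n$).} Fix a $K$-basis $D_1,\dots,D_n$ of $L$. Since $AL=W_n$, these $n$ elements generate the free $A$-module $W_n$ of rank $n$. A surjective endomorphism of the finitely generated module $A^n$ is an isomorphism, so $D_1,\dots,D_n$ is an $A$-basis of $W_n$. Hence the matrix $G=(D_i(x_j))_{i,j}$ is invertible over $A$, and
\[
\Delta=\det G\in K^{*},
\]
since the units of $A$ are the nonzero constants. Because $L$ is a Lie subalgebra, we also have $[D_i,D_k]=\sum_{l}c_{ik}^{l}D_l$ with constant structure constants $c_{ik}^{l}\in K$.

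\textbf{Step 2 (the $n$-dimensional analogue of Corollary \ref{delta}).} I will prove that, for every $i$,
\[
D_i(\Delta)=\bigl(\operatorname{div}D_i+\operatorname{tr}(\operatorname{ad}_L D_i)\bigr)\Delta .
\]
The cleanest route is the volume form $\omega=dx_1\wedge\dots\wedge dx_n$, which satisfies $\omega(D_1,\dots,D_n)=\Delta$ and $\mathcal{L}_{D}\,\omega=(\operatorname{div}D)\,\omega$. The Leibniz rule for the Lie derivative gives
\[
D_i\bigl(\omega(D_1,\dots,D_n)\bigr)=(\mathcal{L}_{D_i}\omega)(D_1,\dots,D_n)+\sum_{k=1}^{n}\omega(D_1,\dots,[D_i,D_k],\dots,D_n).
\]
By multilinearity and alternation, the $k$-th summand equals $c_{ik}^{k}\Delta$. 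Summing over $k$ gives $\operatorname{tr}(\operatorname{ad}_L D_i)\,\Delta$.

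To stay within the paper's elementary style, one can instead differentiate $\det G$ row by row, exactly as in the proof of Lemma \ref{invariant}. Write $D_iD_k(x_j)=D_kD_i(x_j)+[D_i,D_k](x_j)$. The $[D_i,D_k]$ terms produce the trace term. The terms $D_kD_i(x_j)=\sum_m D_k(x_m)\,\partial_m D_i(x_j)$ correspond to multiplying $G$ on the right by the Jacobian matrix of $(D_i(x_1),\dots,D_i(x_n))$, and this contributes $\operatorname{div}D_i\cdot\Delta$. For $n=2$ the identity reduces to Corollary \ref{delta}, which is a useful consistency check.

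\textbf{Step 3 (conclusion).} Since $\Delta$ is a nonzero constant, $D_i(\Delta)=0$, and Step 2 yields
\[
\operatorname{div}D_i=-\operatorname{tr}(\operatorname{ad}_L D_i)\quad\text{for all } i.
\]
Both $\operatorname{div}$ and $\operatorname{tr}\circ\operatorname{ad}_L$ are $K$-linear, so $\operatorname{div}D=-\operatorname{tr}(\operatorname{ad}_L D)$ for every $D\in L$.
\begin{itemize}
\item If $L$ is nilpotent, every $\operatorname{ad}_L D$ is a nilpotent operator, so its trace is $0$.
\item If $L$ is semisimple, then $L=[L,L]$, and the trace of $\operatorname{ad}_L[X,Y]=[\operatorname{ad}_L X,\operatorname{ad}_L Y]$ is $0$.
\end{itemize}
In either case $\operatorname{div}D=0$ for all $D\in L$.

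The main obstacle is the bookkeeping in Step 2: verifying that the "transport" terms assemble exactly into $\operatorname{div}D_i\cdot\Delta$ for general $n$. I would handle this with the matrix identity
\[
\bigl(D_kD_i(x_j)\bigr)_{k,j}=G\cdot J(D_i),
\]
where $J(D_i)=\bigl(\partial_m D_i(x_j)\bigr)_{m,j}$ is the Jacobian matrix of $D_i$, together with Jacobi's formula for the derivative of a determinant.
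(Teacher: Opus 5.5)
Your proof is correct. The paper gives no proof of this proposition: it is quoted as Corollary~1 of \cite{Makedonskyi} and used as a black box in the proof of Theorem~\ref{th3}. So your argument is a self-contained replacement rather than a variant of something in the text.

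It is nonetheless built from the paper's own tools:
\begin{itemize}
\item Your Step~2 is the $n\times n$ analogue of Lemma~\ref{invariant} and Corollary~\ref{delta}, which the paper proves only for two derivations via $2\times 2$ minors.
\item Your Step~3 is the device used in the proof of Theorem~\ref{th2}, where a determinant lying in $K^{*}$ forces its Darboux cofactor to vanish.
\item Step~1 is fine and can be made more elementary: writing $\partial_j=\sum_i h_{ji}D_i$ gives $HG=I$, hence $\det G\in A^{*}=K^{*}$, with no appeal to surjective endomorphisms of $A^n$.
\end{itemize}

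Your approach proves more than the cited statement. For \emph{every} basic subalgebra, $\operatorname{div}D=-\operatorname{tr}(\operatorname{ad}_L D)\in K$. So the divergence is automatically constant, and it vanishes exactly when $L$ is unimodular; nilpotent and semisimple (indeed any perfect) $L$ are special cases.

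This also simplifies Theorem~\ref{th3} for $n\ge 2$. A basic $M$ then lies directly in the proper, infinite-dimensional subalgebra of derivations with constant divergence. Lemma~\ref{lm:max} is used there only to make $M$ simple so that the cited proposition applies, and it could be dropped.
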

	
	\begin{lemma} \label{lm:max}
		Let $M$ be a finite-dimensional maximal subalgebra of $W_n(K),$ where $ n\geq 2$.
		Then every  non-zero ideal of the Lie algebra $M$ has rank $n$.
		In particular, $M$ is not  solvable.
	\end{lemma}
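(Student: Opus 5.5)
Let $M$ be a finite-dimensional maximal subalgebra of $W_n$, $n\ge 2$, and let $J\neq 0$ be an ideal of $M$ with $\mathrm{rk}\, J=k<n$. The plan is to produce a proper subalgebra of $W_n$ that strictly contains $M$, contradicting maximality.

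The natural candidate is the normalizer-type subalgebra of the polynomial hull of $J$. Consider $AJ$; it is a polynomial subalgebra of rank $k$. Since $J$ is an ideal of $M$, for $D\in M$, $f\in A$ and $X\in J$ we have
\[
[D,fX]=D(f)X+f[D,X]\in AJ,
\]
so $[M,AJ]\subseteq AJ$. Hence $N:=M+AJ$ is a subalgebra of $W_n$ containing $M$. Since $J\neq 0$, $AJ$ is infinite-dimensional over $K$, while $M$ is finite-dimensional, so $M\subsetneq N$. By maximality, $N=W_n$. Thus $W_n=M+AJ$, which means $AJ$ has finite codimension in $W_n$.

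This contradicts $\mathrm{rk}\,AJ=k<n$. Indeed, the quotient of $W_n\cong A^n$ by a submodule $AJ$ of rank $k<n$ has positive rank as an $A$-module. It therefore contains a copy of $A$, for instance via a coordinate not in the $R$-span, and so it is infinite-dimensional over $K$. More concretely, $RAJ$ is a proper $R$-subspace of $R^n$, so some $\frac{\partial}{\partial x_i}$ satisfies $A\frac{\partial}{\partial x_i}\cap RAJ=0$, whence $A\frac{\partial}{\partial x_i}$ embeds into $W_n/AJ$. But $W_n/AJ$ is a quotient of the finite-dimensional space $M$. This is the contradiction, so every nonzero ideal has rank $n$.

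For the final statement, suppose $M$ is solvable. Then the last nonzero term $M^{(m)}$ of the derived series is a nonzero abelian ideal, so it has rank $n$. Pick $X_1,\dots,X_n\in M^{(m)}$ that are $A$-independent. These span an $n$-dimensional abelian subalgebra whose $A$-span has rank $n$; equivalently, the $X_i$ are commuting vector fields generically forming a frame.

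The main obstacle is to derive a contradiction from this. The planned route is to use the ideal $M^{(m-1)}$, or more simply the following fact. The centralizer in $W_n$ of $n$ commuting $R$-independent fields is $R$-spanned by them, with coefficients being common first integrals, which are constants. Hence it is finite-dimensional and equal to the $K$-span of the $X_i$, which is $n$-dimensional, since in $RW_n$ the commuting frame has centralizer $\{\sum c_iX_i : X_j(c_i)=0\}$ with $c_i\in K$. In particular the ideal $C:=M^{(m)}$ equals its own centralizer's intersection, so $\dim C=n$.

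Now $M$ acts on $C$, and $M/C$ embeds into $\mathfrak{gl}(C)$ via the adjoint action, because the kernel is the centralizer of $C$ in $M$, which equals $C$. Then $\mathrm{ad}$ gives a solvable linear action. By Lie's theorem, $M$ has a one-dimensional invariant subspace of $C$, that is, a line $KX\subseteq C$ that is an ideal of $M$. This ideal has rank $1<n$, contradicting the first part.
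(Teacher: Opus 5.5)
Your proof is correct. Its first part closes the argument differently from the paper.

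\textbf{Rank of ideals.} Both proofs form $M+AJ$ and use maximality to get $M+AJ=W_n$. The paper then notes that $[M,AJ]\subseteq AJ$ makes $AJ$ an ideal of $W_n$. It concludes $AJ=W_n$, which tacitly uses the simplicity of $W_n$, and this contradicts $\mathrm{rk}\,AJ=k<n$. You instead observe that $W_n/AJ\cong M/(M\cap AJ)$ is finite-dimensional. On the other hand, $A\frac{\partial}{\partial x_i}$ embeds in $W_n/AJ$ for any $i$ with $\frac{\partial}{\partial x_i}\notin RAJ$, so the quotient is infinite-dimensional.

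Your version is elementary and avoids the simplicity of $W_n$, which the paper invokes without stating. The paper's ending, however, uses finite-dimensionality of $M$ only to get $AJ\not\subseteq M$. So its contradiction step applies verbatim to any maximal subalgebra not containing $AJ$, whereas yours needs $\dim M<\infty$ a second time.

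\textbf{Non-solvability.} Your computation of the centralizer of a commuting frame is correct: it shows $\dim C=n$ and $C_M(C)=C$ for $C=M^{(m)}$. It is, however, unnecessary, and there is no real obstacle at this point. Lie's theorem applied to the finite-dimensional $M$-module $C$, or directly to $\mathrm{ad}\,M$ as the paper does, already gives a one-dimensional ideal $KX$. That ideal has rank $1<n$, contradicting the first part.
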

	\begin{proof}
		Suppose  that $I$ is an ideal of $M$ and  that ${\rm rk}(I)=k < n$. 
		Consider the subalgebra $M_I = M+AI$. Then 
		$M \subsetneq M_I$ because $M_I $ is an infinite dimensional subalgebra of $W_n.$  Since $M$ is a maximal subalgebra of $W_n$ we obtain $M_I=W_n.$ But $[M,  AI]\subseteq AI$, so $AI$ is an ideal of $W_n=M_I.$ This equality implies $AI=W_n$ which is impossible because ${\rm rk}(AI)=k<n.$ The obtained contradiction shows that ${\rm rk}(I)=n.$
		If there existed a finite-dimensional solvable maximal Lie subalgebra, then by Lie theorem it would have a one-dimensional ideal. 	By  the above mentioned this is impossible. Therefore every finite-dimensional maximal Lie algebra is not solvable.
	\end{proof}
	\begin{theorem}\label{th3}
		Let $M$ be a  finite dimensional maximal subalgebra of the Lie algebra $W_n.$ Then  $\dim M \geq n+1$. 
	\end{theorem}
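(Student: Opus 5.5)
Let $M$ be a finite-dimensional maximal subalgebra of $W_n$. As noted at the beginning of this section, ${\rm rk}\,M=n$. So $M$ contains $n$ elements linearly independent over $A$, and hence $\dim M\ge n$. It remains to rule out $\dim M=n$. Suppose $\dim M=n$. Then $M$ has a $K$-basis $D_1,\dots,D_n$ that is linearly independent over $A$. The determinant $\Delta=\det(D_i(x_j))$ is then a nonzero polynomial.

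The plan is to show that $AM=W_n$, i.e.\ that $M$ is a basic subalgebra, and then apply Proposition \ref{prop:div_free}.

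First I would show $\Delta\in K^*$. Using the structure constants $[D_i,D_j]=\sum_k c_{ij}^k D_k$ with $c_{ij}^k\in K$, the multi-derivation analogue of Lemma \ref{invariant} and Corollary \ref{delta} should give
\[ D_i(\Delta)=(\operatorname{div}D_i-\operatorname{tr}\operatorname{ad}D_i)\,\Delta . \]
So the principal ideal $(\Delta)$ is $M$-invariant. Now consider the subalgebra $M_1=\{D\in W_n\mid D((\Delta))\subseteq(\Delta)\}$. It contains $M$ and also contains $\Delta W_n$, which is infinite dimensional. If $\Delta\notin K$, then $M_1\neq W_n$, which contradicts the maximality of $M$. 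Hence $\Delta\in K^*$, so $AM=W_n$ and $M$ is basic.

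Next, by Lemma \ref{lm:max}, $M$ is not solvable. Its Levi decomposition is $M=S\ltimes \mathrm{Rad}(M)$ with $S\neq0$. Every nonzero ideal of $M$ has rank $n$, so in particular $\mathrm{Rad}(M)$, if nonzero, has rank $n$. But $\mathrm{Rad}(M)$ has dimension $<n$, so its rank is less than $n$. Therefore $\mathrm{Rad}(M)=0$ and $M$ is semisimple. By Proposition \ref{prop:div_free}, every $D\in M$ is divergence-free.

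Then I would exhibit a proper subalgebra strictly containing $M$. The natural candidate is the subalgebra of divergence-free derivations $S_n=\{D\in W_n\mid \operatorname{div}D=0\}$. It contains $M$, it is infinite dimensional (it contains $x_2^k\partial/\partial x_1$ for all $k$, using $n\ge 2$), and it is proper since $x_1\partial/\partial x_1\notin S_n$. This contradicts the maximality of $M$, so $\dim M\ge n+1$. For $n=1$ the claim holds directly: a one-dimensional $KD$ is never maximal, since $KD\subsetneq KD+K\,fD$ for a suitable $f$ or $AD$ gives a proper intermediate subalgebra, for example $\{gD\}$-type subalgebras; alternatively one can cite the description of $W_1$ in \cite{Bell}.

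The main obstacle is the first step: establishing that $(\Delta)$ is $M$-invariant for general $n$. This requires an $n\times n$ version of Lemma \ref{invariant}. I would derive it by writing $\Delta=\omega(D_1,\dots,D_n)$ for the volume form $\omega$ and using $D_i(\omega(D_1,\dots,D_n))=(L_{D_i}\omega)(\dots)+\sum_j\omega(\dots,[D_i,D_j],\dots)$ together with $L_{D_i}\omega=(\operatorname{div}D_i)\omega$.
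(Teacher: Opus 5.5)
Your argument is correct, and its skeleton matches the paper's. Rank $n$ gives $\dim M\ge n$. If $\dim M=n$, one shows $M$ is basic, gets (semi)simplicity from Lemma \ref{lm:max}, and applies Proposition \ref{prop:div_free}. Finally one places $M$ strictly inside an infinite-dimensional proper subalgebra of divergence-free (or constant-divergence) derivations.

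The genuine difference is how you prove $AM=W_n$. The paper does this in one line: $AM$ is always a subalgebra and is infinite dimensional, so $M\subsetneq AM$, and maximality forces $AM=W_n$. This is the same reasoning the paper uses at the start of the section to get rank $n$. Your route uses the Darboux polynomial $\Delta=\det(D_i(x_j))$, an $n\times n$ analogue of Lemma \ref{invariant}, and the stabilizer of the ideal $(\Delta)$. It is valid, but the step you single out as the main obstacle is unnecessary; all it buys is the explicit reformulation $\Delta\in K^*$.

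The sign in your identity is off. The Lie-derivative computation gives $D_i(\Delta)=(\operatorname{div}D_i+\operatorname{tr}\operatorname{ad}D_i)\Delta$, in agreement with Corollary \ref{delta}, where $\operatorname{tr}\operatorname{ad}D_1=\mu_2$. For instance, $D_1=E_2$ and $D_2=\partial/\partial x_1$ give $\Delta=-x_2$ and $E_2(\Delta)=\Delta=(2-1)\Delta$. This is harmless, since you only use the invariance of $(\Delta)$.

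The remaining differences are cosmetic. Your Levi-decomposition argument is equivalent to the paper's direct remark that a proper ideal of $M$ has dimension, hence rank, less than $n$, so $M$ is simple. Using $S_n$ in the last step is fine for $n\ge 2$.

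Your $n=1$ paragraph should be cleaned up, since $KD+KfD$ is not a subalgebra in general. If $D\notin K\frac{\partial}{\partial x_1}$, then $KD\subsetneq AD\subsetneq W_1$. If $D\in K^*\frac{\partial}{\partial x_1}$, then $KD\subsetneq K\frac{\partial}{\partial x_1}+Kx_1\frac{\partial}{\partial x_1}\subsetneq W_1$.
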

	\begin{proof}
		First assume $\dim M = k < n$. Then ${\rm rk}M \le k < n$. Consider the $A$-submodule (which is also a subalgebra) $ AM=K[x_1, \dots, x_n]M$. This subalgebra is strictly contained in $W_n$, since ${\rm rk}(AM) = {\rm rk}M < n$. Furthermore, $AM$ strictly contains $M$, because $\dim M < \infty$ while $AM$ is infinite-dimensional (as $M \ne 0$). Thus, we have the chain $M \subsetneq AM \subsetneq W_n$, which means $M$ is not maximal. The latter contradicts the conditions of the lemma, therefore $\dim M = k \geq  n$
		
		Now, let $\dim M = n$. If $AM \ne W_n$, then, similarly to the case $\dim M < n$, $M$ is not maximal, as $M \subsetneq AM \subsetneq W_n$.
		Therefore,   $AM= W_n, $ that is,     $M$ is a basic subalgebra of the Lie algebra $W_n.$
		Since $\dim M = n$ the subalgebra $M$ is simple by Lemma \ref{lm:max}.
		Denote by $L$ the subalgebra of $W_n$ consisting of all divergence-free derivations. By  Proposition \ref{prop:div_free}, we have $M\subseteq L.$ But $L$ is strictly contained in the subalgebra $L_1$ consisting of all derivations with constant divergences. The latter means that $M$ is not maximal in $W_n$ which contradicts our assumption. Therefore $\dim M\geq n+1.$ 
		
	\end{proof}
	
	\begin{theorem}\label{th3a}
		Let $M$ be a polynomial maximal  subalgebra of $\operatorname{rank}$ $n$ from $W_n$. Then 
		
		(1)	$ 	M = \{ D \in W_n \mid D(I) \subseteq I \} $
		for an ideal $I$ of the ring $A$.
		
		(2) The subalgebra $I^2W_n$ is a nonzero proper ideal of $M$, so $M$ is a non-simple Lie algebra. 
	\end{theorem}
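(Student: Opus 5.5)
The plan is to find the ideal $I$ by mimicking the argument in the proof of Theorem \ref{th2}. Set
\[
I=\{ f\in A \mid fW_n\subseteq M\}.
\]

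\textbf{Step 1: $I$ is a nonzero proper ideal.} Since $M$ is an $A$-submodule, $I$ is an ideal of $A$. Because ${\rm rk}\,M=n$, there are $D_1,\dots,D_n\in M$ that are linearly independent over $A$. Let $\Delta=\det\big(D_i(x_j)\big)\neq 0$. By Cramer's rule, $\Delta\frac{\partial}{\partial x_j}\in AD_1+\dots+AD_n\subseteq M$ for every $j$, so $\Delta W_n\subseteq M$ and $\Delta\in I$. Moreover $I\neq A$, since otherwise $W_n\subseteq M$.

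\textbf{Step 2: $I$ is $M$-invariant.} Take $D\in M$, $f\in I$ and $D_1\in W_n$. Then
\[
D(f)D_1=[D,fD_1]-f[D,D_1]\in M,
\]
because both terms on the right lie in $fW_n\subseteq M$. Hence $D(f)\in I$, so $M\subseteq M_1:=\{D\in W_n\mid D(I)\subseteq I\}$. As noted in the proof of Theorem \ref{th1}, $W_n$ leaves no proper nonzero ideal invariant, so $M_1\ne W_n$. Maximality of $M$ then gives $M=M_1$, which proves (1).

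\textbf{Step 3: $I^2W_n$ is an ideal of $M$.} First, $I^2W_n\subseteq IW_n\subseteq M$. Take $D\in M$, $f,g\in I$ and $E\in W_n$. Then
\[
[D,fgE]=D(fg)E+fg[D,E].
\]
Here $D(fg)=D(f)g+fD(g)\in I^2$ because $D(I)\subseteq I$, and $fg[D,E]\in I^2W_n$. So $[M,I^2W_n]\subseteq I^2W_n$, i.e. $I^2W_n$ is an ideal of $M$. It is nonzero because $I\ne0$ and $W_n$ is a free $A$-module.

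\textbf{Step 4: $I^2W_n\neq M$.} This is the main obstacle. The plan is to exhibit an element of $M$ not lying in $I^2W_n$. Fix $0\neq f\in I$, so $fW_n\subseteq M$, and pick $\partial=\frac{\partial}{\partial x_i}$ with $\partial(f)\neq 0$ (possible since $f$ is nonconstant, as $I$ is proper).

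Suppose $M=I^2W_n$. Then $IW_n\subseteq M=I^2W_n$. Comparing coefficients in the free module $W_n$ gives $I\subseteq I^2$, hence $I=I^2$. Since $I$ is a finitely generated ideal, the determinant trick (Nakayama's lemma) yields an idempotent $e\in A$ with $I=eA$. The only idempotents of the domain $A$ are $0$ and $1$, so $I=0$ or $I=A$, contradicting Step 1.

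Therefore $I^2W_n$ is a nonzero proper ideal of $M$, and $M$ is not simple.
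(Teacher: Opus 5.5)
Your proof is correct and follows the paper's argument: you use the same ideal $I=\{f\in A\mid fW_n\subseteq M\}$, the same identity $D(f)D_1=[D,fD_1]-f[D,D_1]$ to get $M$-invariance, and the same reduction of $I^2W_n\neq M$ to $I=I^2$ followed by Nakayama. Your Cramer's-rule proof that $I\neq 0$ and your explicit exclusion of $I=A$ make precise two points the paper leaves implicit; the exclusion of $I=A$ is needed both for $M_1\neq W_n$ and in the Nakayama step, where the paper only concludes $I=0$. The choice of $\partial$ with $\partial(f)\neq 0$ at the start of Step 4 is never used and can be deleted.
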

	\begin{proof}
		(1) Consider the ideal 		$	I = \langle f \in A \mid fW_n \subseteq M \rangle.	$
		Since $\operatorname{rk} M = n$  the quotient module $W_n/M$ has rank $0$. Using standard facts   about such modules over notherian rings (see, for example, \cite{Matsumura}) we get that the ideal $I$ is nonzero. Take any element $D \in M$ and show that $D(I) \subseteq I$. Really, for any $f \in I$ and any $D_1 \in W_n$ we have
		$$
		D(f)D_1 = [D, fD_1] - f[D, D_1].
		$$
		The both products from the right side of this equality belong to $M$, so $D(f)D_1 \in M$. The latter means that $D(f) \in I$, i.e., $D(I) \subseteq I$ for any $D \in M$. Since $M$ is a maximal subalgebra of $W_n$ we obtain
		$ M = \langle D \in W_n \mid D(I) \subseteq I \rangle .$
		
		(2) One can easily show that $I^2W_n$ is a nonzero ideal of the Lie algebra $M.$ If $I^2W_n=IW_n $ then $I^2=I.$ Applying the Nakayama lemma (see, for example, \cite{Matsumura}) we obtain that $I=0.$ The latter contradicts the above proven inequality $I\not =0.$ Therefore $I^2W_n$ is a proper nonzero ideal of the Lie algebra $M.$ 
	\end{proof}
	\section{Polynomial  maximal subalgebras of  $W_n(K)$}
	\begin{lemma}[see, for example,  \cite{Now}] \label{lm:Euler}
		Let $E_n = \sum_{i=1}^n x_i \frac{\partial}{\partial x_i}$ be the Euler derivation,
		$D \in W^{[i]}_n$ and let $f \in A$ be a homogeneous polynomial of degree $m$.
		Then
		
		(1) $E_n(f) = mf;$
		
		(2) $ [E_n, D] = iD,$
		in particular, for $D \in W^{[0]}_n$ it holds $[E_n, D] = 0;$
		
		(3)  ${\rm div}(f E_n) = (m+n)f.$
		
	\end{lemma}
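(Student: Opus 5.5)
Each of the three items is a direct computation, so the plan is to reduce everything to the definition of $E_n$, the Leibniz rule, and the identity $[gD_1,D_2]=g[D_1,D_2]-D_2(g)D_1$. Since all three statements are $K$-linear (in $f$, respectively in $D$), it is enough to check them on monomials, respectively on elementary homogeneous derivations $x^\alpha\frac{\partial}{\partial x_j}$ with $|\alpha|=i+1$.

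For (1), we take a monomial $f=x_1^{\alpha_1}\cdots x_n^{\alpha_n}$ with $\sum\alpha_k=m$. Then $x_k\frac{\partial f}{\partial x_k}=\alpha_k f$, and summing over $k$ gives $E_n(f)=mf$. Linearity then extends this to all homogeneous $f$ of degree $m$ (Euler's identity).

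For (2), we write $D=\sum_j f_j\frac{\partial}{\partial x_j}$ with each $f_j$ homogeneous of degree $i+1$. We evaluate the commutator on the coordinate functions: $[E_n,D](x_k)=E_n(f_k)-D(x_k)=(i+1)f_k-f_k=if_k$, using (1). A derivation is determined by its values on $x_1,\dots,x_n$, so $[E_n,D]=iD$. The case $i=0$ gives $[E_n,D]=0$.

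For (3), we compute $\operatorname{div}(fE_n)=\sum_k\frac{\partial}{\partial x_k}(fx_k)=\sum_k\bigl(x_k\frac{\partial f}{\partial x_k}+f\bigr)=E_n(f)+nf$. By (1) this equals $(m+n)f$.

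There is no real obstacle. The only point needing care is in (2): we must use that both sides are derivations, hence determined by their values on generators, rather than comparing them on arbitrary polynomials.
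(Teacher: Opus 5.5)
Your proposal is correct, and all three computations work as written. In (2), evaluating $[E_n,D]$ on the coordinates $x_k$ also handles the case $i=-1$, since then $E_n(f_k)=0$ for constant $f_k$. The paper does not prove this lemma; it only gives a literature reference, and your direct computation is the standard verification of these identities, so there is no separate argument to compare against.
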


	\begin{lemma} \label{lm:graded} (see, for example, \cite{CP25}).
		Let $L$ be a subalgebra of $W_n({K})$ such that $E_n \in L$ (where $E_n$ is the Euler derivation). Then $L$ is a graded subalgebra of $W_n({K})$ with the grading induced from the standard grading of $W_n({K})$.
	\end{lemma}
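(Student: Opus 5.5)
The plan is to decompose each $D\in L$ into homogeneous components and to show that every component lies in $L$. Write $D=\sum_{i=-1}^{m} D_i$ with $D_i\in W_n^{[i]}$; only finitely many components are nonzero, since each coefficient of $D$ is a polynomial. By Lemma \ref{lm:Euler}(2) we have $[E_n,D_i]=iD_i$. Since $E_n\in L$ and $L$ is a subalgebra, $\operatorname{ad}E_n$ maps $L$ into itself. Iterating gives
\[
(\operatorname{ad}E_n)^k D=\sum_{i=-1}^{m} i^k D_i\in L, \qquad k=0,1,\dots,m+1.
\]

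This is a linear system for the vectors $D_{-1},\dots,D_m$. Its coefficient matrix is the Vandermonde matrix $(i^k)$ with distinct nodes $i=-1,0,\dots,m$, and it is invertible because $\operatorname{char}K=0$ makes these integers distinct elements of $K$. Inverting the matrix expresses each $D_i$ as a $K$-linear combination of the elements $(\operatorname{ad}E_n)^kD\in L$, so $D_i\in L$.

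Equivalently, one can use Lagrange interpolation: the polynomial $p_j(t)=\prod_{i\neq j}(t-i)/(j-i)$ satisfies $p_j(\operatorname{ad}E_n)D=D_j$. This gives $L=\bigoplus_i (L\cap W_n^{[i]})$, so $L$ is graded with the induced grading. Each $L\cap W_n^{[i]}$ is a subspace, and $[L\cap W_n^{[i]},L\cap W_n^{[j]}]\subseteq L\cap W_n^{[i+j]}$ is automatic from the grading of $W_n$.

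No step here is a real obstacle. The only point that needs care is that the eigenvalues $i$ of $\operatorname{ad}E_n$ on the components are distinct scalars, which is exactly where characteristic zero is used.
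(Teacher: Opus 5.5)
Your proof is correct and complete. By Lemma~\ref{lm:Euler}(2), the homogeneous components of $D\in L$ are eigenvectors of $\operatorname{ad}E_n$ with the distinct eigenvalues $-1,0,\dots,m$, so the Vandermonde (equivalently, Lagrange interpolation) step writes each component as a $K$-linear combination of the elements $(\operatorname{ad}E_n)^kD\in L$; the paper gives no proof here, only a citation, and your argument is the standard one.
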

	
	\begin{remark}\label{2modules}
		
		If $U, V$ are irreducible non-isomorphic $W_n^{[0]}$-modules, then the module $U\oplus V$ has only three proper submodules:   $U, V$ and $0$.
	\end{remark}
	The  next result  was first stated in \cite{KS} (Chapt.1, p.267)  for Lie algebras over fields  of positive characteristic, but it is true  for characteristic zero. An explicit proof of this result can  be found in \cite{CP25}.
	\begin{theorem}\label{ThKS}\cite{KS}, (Chapt.1, p.267).
		Let the Lie algebra $W_n(K) = W_n, n\geq 2$ be written as a direct sum of homogeneous 
		components of the standard grading
		\begin{equation} \label{eq:e6}
			W_n = W_n^{[-1]} \oplus W_n^{[0]} \oplus \dots \oplus W_n^{[m]} \oplus \dots.
		\end{equation}
		Then $L = W_n^{[0]}$ is a subalgebra of \  $W_n$, $L \simeq \mathfrak{gl}_n(K)$
		and every summand of the sum \eqref{eq:e6} is a finite dimensional module over $L$.
		Every $L$-module $W_n^{[m]}, m \ge 0$ is a direct sum $W_n^{[m]} = M_m \oplus N_m$
		of two irreducible submodules, where $M_m$ consists of divergence-free derivations and $N_m$
		consists of all the derivations from $W_n^{[m]}$ that are polynomial multiple of the Euler derivation $E_n$.
	\end{theorem}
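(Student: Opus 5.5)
We must prove: every $W_n^{[m]}$, $m\ge 0$, is a finite-dimensional module over $L=W_n^{[0]}\simeq\mathfrak{gl}_n(K)$, and $W_n^{[m]}=M_m\oplus N_m$ with $M_m$ (divergence-free) and $N_m$ (multiples of $E_n$) irreducible.

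The module structure and finite-dimensionality are immediate. Since $W_n$ is graded, $[W_n^{[0]},W_n^{[m]}]\subseteq W_n^{[m]}$, and each component is spanned by the finitely many monomial vector fields $x^\alpha\partial/\partial x_j$ with $|\alpha|=m+1$.

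\emph{Identifying $W_n^{[m]}$ as a $\mathfrak{gl}_n$-module.} Let $V=\mathrm{span}(x_1,\dots,x_n)$. The map $x^\alpha\partial/\partial x_j\mapsto x^\alpha\otimes \partial/\partial x_j$ identifies $W_n^{[m]}$ with $S^{m+1}V\otimes V^*$. I would check that this identification is $L$-equivariant. Next consider the divergence map
\[
\mathrm{div}\colon W_n^{[m]}\to S^mV .
\]
It is an $L$-homomorphism. This follows from $\mathrm{div}[D_0,D]=D_0(\mathrm{div}D)-D(\mathrm{div}D_0)$, where the second term vanishes because $\mathrm{div}D_0$ is constant for linear $D_0$. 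The map is surjective: by Lemma \ref{lm:Euler}(3), $\mathrm{div}(fE_n)=(m+n)f$, and $m+n\neq 0$ since the characteristic is zero. Consequently $W_n^{[m]}=M_m\oplus N_m$, where $M_m=\ker\mathrm{div}$ and $N_m=S^mV\cdot E_n$. The sum is direct because $fE_n$ is divergence-free only for $f=0$. Also, $N_m$ is a submodule: $[D_0,fE_n]=D_0(f)E_n$ by Lemma \ref{lm:Euler}(2). Finally, $N_m\cong S^mV$, which is irreducible over $\mathfrak{gl}_n$, since symmetric powers of the standard representation are irreducible.

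\emph{Irreducibility of $M_m$ (the main step).} The approach is the highest-weight argument for $\mathfrak{sl}_n=M_0$ with respect to the diagonal torus $h_i=x_i\partial/\partial x_i$. The monomial $x^\alpha\partial/\partial x_j$ has weight $\alpha-\epsilon_j$.

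First take a nonzero submodule $U\subseteq M_m$. Applying the raising operators $x_i\partial/\partial x_k$ with $i<k$ repeatedly, we land in the space of highest weight vectors, since these operators are locally nilpotent on a finite-dimensional module. So $U$ contains a highest weight vector.

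Next, classify the highest weight vectors in $M_m$. Take $D=\sum_j f_j\partial/\partial x_j$ with $[x_i\partial/\partial x_k,D]=0$ for all $i<k$. I would show that $D$ is proportional to
\[
x_1^{m}\bigl(x_1\partial/\partial x_n\bigr)
\]
of weight $(m+1)\epsilon_1-\epsilon_n$, modulo the Euler direction. The general solution is a combination of $x_1^{m+1}\partial/\partial x_n$ and $x_1^mE_n$. The divergence condition kills the $x_1^mE_n$ component, so there is exactly one highest weight line. Therefore $U$ contains $x_1^{m+1}\partial/\partial x_n$.

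The last step is to show that $x_1^{m+1}\partial/\partial x_n$ generates $M_m$. This is the obstacle: it needs the multiplicity-one statement, which I would get by comparing dimensions. We have $\dim M_m=n\binom{m+n}{n-1}-\binom{m+n-1}{n-1}$, and this must equal the Weyl dimension of the irreducible module with highest weight $(m+1)\epsilon_1-\epsilon_n$. Alternatively, one notes that $S^{m+1}V\otimes V^*$ decomposes, by the Pieri rule, as exactly two irreducibles, $V_{(m+1)\epsilon_1-\epsilon_n}\oplus S^mV$. Combined with the splitting already obtained, this forces $M_m$ to be irreducible.

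For $n\ge2$ the two summands are non-isomorphic, because they have different highest weights. This is consistent with Remark \ref{2modules}.
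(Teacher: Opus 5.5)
The paper gives no proof of this theorem. It quotes it from \cite{KS}, where it is stated in positive characteristic, and refers to \cite{CP25} for an explicit argument, so there is no in-paper proof to compare with. On its own terms, your proposal is correct. The first half is complete: the divergence map is an $L$-homomorphism, $W_n^{[m]}=\ker\operatorname{div}\oplus S^mV\cdot E_n$ via $\operatorname{div}(fE_n)=(m+n)f$, and $N_m\cong S^mV$. Your Pieri-rule route at the end closes the argument for $M_m$: by Jordan--H\"older, $M_m$ is left with the single composition factor $V_{(m+1)\epsilon_1-\epsilon_n}$. That route does rely on Pieri as a black box.

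Two remarks on the highest-weight part. Write $\partial_k=\partial/\partial x_k$ and $D=\sum_j f_j\partial_j$.

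First, the classification of highest weight vectors, which you only announce, is a short direct computation.
\begin{itemize}
\item For $k\ge 2$, the condition $[x_1\partial_k,D]=0$ reads $\partial_k f_j=0$ for $j\ne k$ and $x_1\partial_k f_k=f_1$.
\item These force $D=c\,x_1^mE_n+\sum_{k\ge 2}d_k x_1^{m+1}\partial_k$.
\item For $2\le i<k$ one gets $[x_i\partial_k,D]=-d_i x_1^{m+1}\partial_k$, which kills $d_2,\dots,d_{n-1}$.
\item The divergence condition kills $c$.
\end{itemize}

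Second, the ``obstacle'' you describe is not one. Once $M_m$ has a one-dimensional space of vectors annihilated by the raising operators, Weyl's complete reducibility theorem for $M_0\cong\mathfrak{sl}_n$ already gives irreducibility. Here $E_n$ acts on $W_n^{[m]}$ by the scalar $m$, so $L$-submodules and $M_0$-submodules coincide. Each irreducible summand of $M_m$ would contribute its own highest weight vector, and these would be linearly independent. So you need neither the Weyl dimension formula nor Pieri. Conversely, if you do cite Pieri, the singular-vector paragraph is redundant.

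The self-contained version has a further advantage. The same computation shows that the only highest weight line in $N_m$ is $Kx_1^mE_n$. That gives irreducibility of $N_m$ without appealing to irreducibility of $S^mV$, and treats $M_m$ and $N_m$ uniformly.
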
	
	
	\begin{proposition}\label{products} (see, \cite{CP25})
		Let $W_n = W_{n}^{[-1]}\oplus W_{n}^{[0]}\oplus\dots\oplus W_{n}^{[i]}\oplus\dots$ be the decomposition of $ W_n $ into  direct sum of homogeneous components of the standard grading on $W_{n}, n\geq 2.$ Let $W_{n}^{[i]}=M_{i}\oplus N_{i}$, $i\ge 0$, where $M_{i}$, $N_{i}$ are irreducible submodules of the $W_{n}^{[0]}$-module $W_{n}^{[i]}$ from Theorem \ref{ThKS}. Then
		\begin{enumerate}
			\item If  $S, T$ are submodules of the $W_n^{[0]}$-module $W_n,$ then $[S,  T] $ is also a submodule of $W_n;$
			\item $[M_{i}, M_{j}] = M_{i+j}$ for all $i,j\ge 0$;
			\item $[N_{i}, N_{j}] = N_{i+j}$ for all $i,j\ge 0$, $i\ne j$; $[N_{i}, N_{i}] = 0$, for all $i\ge 0$;
			\item $[M_{i}, N_{j}] = W_{n}^{[i+j]}$ for all $i,j\ge 0$, except the cases:
			(a) $i=j=0$, where $[M_{0}, N_{0}] = 0$, 
			
			(b) $i= 0$, $j= 1,2,\dots$, where $[M_{0}, N_{j}] = N_{j}$,
			
			(c) $ i\geq 1 , j=0,  $ where $ [M_i, N_0]=M_i.$
			\item $[W_n^{[-1]}, N_{j}] = W_{n}^{[j-1]}$ for all $j\ge 0$
		\end{enumerate}
	\end{proposition}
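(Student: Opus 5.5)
The plan is to reduce every statement to two ingredients. The first is that each bracket is a $W_n^{[0]}$-submodule of a homogeneous component. The second is the decomposition $W_n^{[m]}=M_m\oplus N_m$ of Theorem~\ref{ThKS}, together with Remark~\ref{2modules}: for $m\ge 0$ the only submodules of $W_n^{[m]}$ are $0$, $M_m$, $N_m$ and $W_n^{[m]}$. For this I also need that $M_m\not\simeq N_m$; I would check it by comparing highest weights of the $\mathfrak{sl}_n$-action, or simply by comparing dimensions for $m\ge 1$. With this in hand, each bracket is determined by deciding whether its $M$-projection and its $N$-projection vanish, and that can be settled by explicit elements.

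For part (1), I take $X\in W_n^{[0]}$, $s\in S$, $t\in T$ and use the Jacobi identity
\[
[X,[s,t]]=[[X,s],t]+[s,[X,t]] .
\]
Both terms on the right lie in $[S,T]$, so $[S,T]$ is a submodule. Since $[W_n^{[i]},W_n^{[j]}]\subseteq W_n^{[i+j]}$, the bracket of two homogeneous submodules is a submodule of $W_n^{[i+j]}$.

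The remaining parts rest on a few explicit formulas. I write elements of $N_j$ as $fE_n$ with $f$ homogeneous of degree $j$, and use Lemma~\ref{lm:Euler}.

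For (3), I compute
\[
[fE_n,gE_n]=\bigl(fE_n(g)-gE_n(f)\bigr)E_n=(j-i)fgE_n .
\]
This vanishes when $i=j$. When $i\ne j$, the products $fg$ span all forms of degree $i+j$, so the bracket is $N_{i+j}$.

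For (2), I use the identity
\[
\operatorname{div}[D_1,D_2]=D_1(\operatorname{div}D_2)-D_2(\operatorname{div}D_1),
\]
which gives $[M_i,M_j]\subseteq M_{i+j}$. Since $M_{i+j}$ is irreducible, it then suffices to exhibit one nonzero bracket. If $i=0$ or $j=0$, the action of $M_0\simeq\mathfrak{sl}_n$ on the irreducible module $M_{i+j}$ is nontrivial (for $i+j=0$ this holds because $\mathfrak{sl}_n$ is nonabelian), so the bracket is all of $M_{i+j}$. If $i,j\ge1$, I take
\[
\Bigl[x_2^{i+1}\tfrac{\partial}{\partial x_1},\,x_1^{j+1}\tfrac{\partial}{\partial x_2}\Bigr]\ne0,
\]
which uses $n\ge 2$.

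For (4), I take $D\in M_i$ and $f$ of degree $j$ and compute
\[
[D,fE_n]=D(f)E_n+f[D,E_n]=D(f)E_n-ifD .
\]
\begin{itemize}
\item If $i=0$, then $[D,E_n]=0$ and the bracket is $D(f)E_n\in N_j$. It is zero when $j=0$, and it equals $N_j$ when $j\ge1$, because some $D(f)\ne0$ and $N_j$ is irreducible.
\item If $j=0$, then $f$ is constant and the bracket is $-iD$, giving $M_i$.
\item If $i,j\ge1$, I must show that both projections are nonzero. Since $\operatorname{div}(fD)=D(f)$, the divergence of the bracket is $(n+j)D(f)$, so the $N$-component is nonzero as soon as $D(f)\ne0$.
\end{itemize}
For the $M$-component in the last case, I need an element for which $D(f)E_n-ifD$ is not a polynomial multiple of $E_n$. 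My first try is $D=x_2^{i+1}\frac{\partial}{\partial x_1}$ together with a suitable monomial $f$, arranging that the $\frac{\partial}{\partial x_1}$-coefficient is not $x_1$ times a common factor shared with the other coefficients.

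For (5), I compute
\[
\Bigl[\tfrac{\partial}{\partial x_k},fE_n\Bigr]=\tfrac{\partial f}{\partial x_k}E_n+f\tfrac{\partial}{\partial x_k}.
\]
For $j=0$ these brackets are the $\frac{\partial}{\partial x_k}$, which span $W_n^{[-1]}$. For $j\ge1$, the divergence is $(n+j)\frac{\partial f}{\partial x_k}$, which is nonzero for suitable $f$, so the $N$-component is nonzero. The $M$-component is nonzero because $f\frac{\partial}{\partial x_k}$ is not proportional to $E_n$ when $n\ge2$.

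I expect the main obstacle to be the generic case of (4). There one has to produce, cleanly, elements whose bracket has both a nonzero divergence part and a nonzero divergence-free part, and also to justify $M_m\not\simeq N_m$ so that Remark~\ref{2modules} applies.
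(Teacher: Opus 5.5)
Your proof is correct. There is nothing in the paper to compare it with: the paper states this proposition without proof and cites \cite{CP25}.

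Your structure works as stated. Part (1) follows from the Jacobi identity. In every other case the bracket is a $W_n^{[0]}$-submodule of some $W_n^{[m]}=M_m\oplus N_m$. The divergence detects its $N_m$-component, because $\operatorname{div}(gE_n)=(n+m)g$. Whether it is a polynomial multiple of $E_n$ detects its $M_m$-component. Your formulas are all right: $[fE_n,gE_n]=(j-i)fgE_n$; $[D,fE_n]=D(f)E_n-ifD$ with divergence $(n+j)D(f)$; and $[\frac{\partial}{\partial x_k},fE_n]=\frac{\partial f}{\partial x_k}E_n+f\frac{\partial}{\partial x_k}$ with divergence $(n+j)\frac{\partial f}{\partial x_k}$. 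The same goes for the divergence identity you use in (2). There, the nontriviality of the $M_0$-action on $M_j$ for $j\ge1$ holds because $E_n$ acts by a scalar. A trivial $\mathfrak{sl}_n$-action would then make every subspace a submodule, contradicting irreducibility since $\dim M_j\ge 2$.

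Both points you flag as obstacles close quickly.

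\emph{The $M$-component in the generic case of (4).} Take $D=x_2^{i+1}\frac{\partial}{\partial x_1}$ and $f=x_1^j$; for this step any nonzero $f$ works. Suppose $D(f)E_n-ifD=gE_n$. The $\frac{\partial}{\partial x_2}$-coefficient forces $g=D(f)$. Comparing $\frac{\partial}{\partial x_1}$-coefficients then gives $ifx_2^{i+1}=0$, a contradiction. The same pair also has $D(f)=jx_1^{j-1}x_2^{i+1}\neq 0$, so one element has both projections nonzero.

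\emph{Non-isomorphism $M_m\not\simeq N_m$.} A dimension count works uniformly for all $m\ge 0$. We have $\dim N_m=\binom{n+m-1}{m}$ and $\dim W_n^{[m]}=n\binom{n+m}{m+1}=\frac{n(n+m)}{m+1}\binom{n+m-1}{m}$. So $\dim M_m=\dim N_m$ would force $n(n+m)=2(m+1)$, which is impossible for $n\ge 2$. You need the case $m=0$ too, for (5) with $j=1$, so do not restrict the count to $m\ge1$. Note also that $E_n$ acts by the same scalar $m$ on $M_m$ and $N_m$, so it cannot separate them; dimensions or $\mathfrak{sl}_n$-weights are needed, as you say. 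With this, Remark~\ref{2modules} applies in every case.
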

	
	\begin{corollary}\label{W_iWj} (see \cite{CP25})
		Let  $W^{[i]}_n$ and $W^{[j]}_n$ be homogeneous components  of the standard grading $W_n= \underset{i\ge-1}{\bigoplus}W_{n}^{[i]}, n\geq 2.$ Then  the next equalities are satisfied: 
		$$[W^{[i]}_n, W^{[j]}_n]= W^{[i+j]}_n, i, j\geq 0,$$ except the case when  $i=j=0$, where $[W_{n}^{[0]}, W_{n}^{[0]}]=M_{0}\simeq \mathfrak{sl}_{n}(K)$. Obviously $[W_n^{[-1]}, W_n^{[-1]}]=0$ since $[\frac{\partial}{\partial x_i}, \frac{\partial}{\partial x_j}]=0, i,j=1,\ldots ,n. $
	\end{corollary}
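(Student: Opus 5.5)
The plan is to reduce everything to Proposition \ref{products}, using bilinearity of the bracket and the decompositions $W_n^{[i]}=M_i\oplus N_i$ from Theorem \ref{ThKS}. Here $[S,T]$ denotes the linear span of all brackets, so $[S_1+S_2,T]=[S_1,T]+[S_2,T]$. For $i,j\ge 0$ we then have
\[
[W_n^{[i]},W_n^{[j]}]=[M_i,M_j]+[M_i,N_j]+[N_i,M_j]+[N_i,N_j].
\]
Since the standard grading is a Lie algebra grading, $[W_n^{[i]},W_n^{[j]}]\subseteq W_n^{[i+j]}$ holds automatically. So in each case it suffices to show that some of these four summands already exhaust $W_n^{[i+j]}$.

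The cases split as follows.
\begin{itemize}
\item If $i,j\ge 1$ (including $i=j$), part (4) of Proposition \ref{products} gives $[M_i,N_j]=W_n^{[i+j]}$ directly, since none of the exceptional cases (a)--(c) applies.
\item If $i=0$ and $j\ge 1$, exception (b) gives $[M_0,N_j]=N_j$. Exception (c), applied with the indices interchanged and using antisymmetry $[N_0,M_j]=[M_j,N_0]$, gives $[N_0,M_j]=M_j$. Hence the sum contains $M_j\oplus N_j=W_n^{[j]}$.
\item The case $i\ge 1$, $j=0$ is symmetric, again by antisymmetry of the bracket.
\end{itemize}

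In the exceptional case $i=j=0$:
\begin{itemize}
\item part (2) of Proposition \ref{products} gives $[M_0,M_0]=M_0$;
\item exception (a) of part (4) gives $[M_0,N_0]=0$, and by antisymmetry $[N_0,M_0]=0$;
\item part (3) gives $[N_0,N_0]=0$, which is also clear directly because $N_0=KE_n$ is one-dimensional.
\end{itemize}
Hence $[W_n^{[0]},W_n^{[0]}]=M_0\simeq\mathfrak{sl}_n(K)$. The final claim $[W_n^{[-1]},W_n^{[-1]}]=0$ is immediate, since $W_n^{[-1]}$ is spanned by the commuting partial derivatives.

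No step here is a real obstacle. The only point needing care is the mixed case $i=0$, $j\ge1$ (and its mirror image), because there neither $[M_0,N_j]$ nor $[N_0,M_j]$ alone is all of $W_n^{[j]}$. One must combine exceptions (b) and (c), with antisymmetry used to bring (c) into the right form, to recover both irreducible summands $M_j$ and $N_j$.
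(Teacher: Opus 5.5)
Your proof is correct and is essentially the paper's argument: the paper just cites Proposition~\ref{products} (with Remark~\ref{2modules}) and leaves the case analysis implicit, whereas you spell it out, including the point that for $i=0$, $j\ge 1$ exceptions (b) and (c) must be combined via antisymmetry. The paper's further remark that one must check $[W_n^{[-1]},W_n^{[m]}]\neq 0$ concerns brackets with $W_n^{[-1]}$, which the paper later invokes in Theorem~\ref{th5}; it is not needed for the statement as written.
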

	\begin{proof}
		In view of Proposition \ref{2modules} and Proposition \ref{products} we need only to show that 
		$[W_{n}^{[-1]}, W_{n}^{[m]}]\not = 0$, $m\ge 0$. But this inequality can be checked immediately. 
	\end{proof}
	\begin{theorem}\label{th5}
		The subspace $L = W_n^{[-1]} \oplus W_n^{[0]} \oplus N_1$  is a finite dimensional maximal subalgebra of the Lie algebra $W_n$. And vice versa, every finite dimensional maximal subalgebra of $W_n$ containing $W_n^{[0]}$ coincides with $L$.
	\end{theorem}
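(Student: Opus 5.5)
Here $L=W_n^{[-1]}\oplus W_n^{[0]}\oplus N_1$, where $N_1=\{\ell E_n \mid \ell \text{ linear}\}$. This is the projective realization of $\mathfrak{sl}_{n+1}(K)$, spanned by $\partial/\partial x_i$, $x_j\partial/\partial x_i$ and $x_jE_n$.

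\emph{Step 1: $L$ is a subalgebra.} We check brackets using the grading and Proposition \ref{products}:
\begin{itemize}
\item $[W_n^{[-1]},N_1]\subseteq W_n^{[0]}$;
\item $[W_n^{[0]},N_1]\subseteq N_1$, by parts (c) and (b), since $[M_0,N_1]=N_1$ and $[N_0,N_1]=N_1$ from $[E_n,D]=D$;
\item $[N_1,N_1]=0$, by part (3);
\item the remaining brackets land in degrees $-2$, $-1$ or $0$ trivially.
\end{itemize}
So $L$ is a finite dimensional subalgebra of dimension $n+n^2+n=n^2+2n$.

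\emph{Step 2: maximality.} Let $L\subsetneq L'$ be a subalgebra. Since $E_n\in L\subseteq L'$, Lemma \ref{lm:graded} shows $L'$ is graded, so $L'=\bigoplus_i (L'\cap W_n^{[i]})$. Each $L'\cap W_n^{[i]}$ is a $W_n^{[0]}$-submodule, and $L'$ contains all of $W_n^{[i]}$ for $i\le 0$ and $N_1$.

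There are two cases for where $L'$ gets something new.
\begin{itemize}
\item If the new element lies in degree $1$, then by Remark \ref{2modules} we get $M_1\subseteq L'$, hence $W_n^{[1]}\subseteq L'$.
\item Otherwise $L'$ contains a nonzero submodule $M_m$ or $N_m$ with $m\ge2$. Bracket repeatedly with $W_n^{[-1]}$ to descend. For $N_m$, part (5) gives $[W_n^{[-1]},N_m]=W_n^{[m-1]}$, which contains $M_{m-1}$ when $m-1\ge1$. For $M_m$, a direct check shows $[W_n^{[-1]},M_m]$ is a nonzero submodule of $W_n^{[m-1]}$ containing $M_{m-1}$ (differentiate a divergence-free field). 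Iterating down to degree $1$ yields $M_1\subseteq L'$.
\end{itemize}
In either case $W_n^{[1]}\subseteq L'$. Corollary \ref{W_iWj} then gives $[W_n^{[1]},W_n^{[j]}]=W_n^{[j+1]}$ for $j\ge 1$, so by induction every component lies in $L'$ and $L'=W_n$.

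This descent step is the main obstacle. It needs the fact that $[W_n^{[-1]},M_m]\ne 0$ and projects onto $M_{m-1}$. I would verify this by irreducibility: the bracket is a $W_n^{[0]}$-submodule by Proposition \ref{products}(1), so it suffices to show it is nonzero and not contained in $N_{m-1}$. To do this I would exhibit an explicit element, e.g. $[\partial_1, x_1^{m+1}\partial_2]=(m+1)x_1^m\partial_2$, which is divergence-free.

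\emph{Step 3: converse.} Let $M\supseteq W_n^{[0]}$ be a finite dimensional maximal subalgebra. Then $E_n\in M$, so $M$ is graded by Lemma \ref{lm:graded}, and each component is a $W_n^{[0]}$-submodule.

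First, $M$ contains no $M_m$ with $m\ge1$. Indeed $[M_1,M_j]=M_{j+1}$, and a similar descending/ascending argument would force infinitely many nonzero components, contradicting finite dimensionality. Likewise $M$ cannot contain $N_j$ with $j\ge2$, since brackets with $W_n^{[-1]}$ or with $N_1$ generate arbitrarily high degrees or an $M$-component. So $M\subseteq W_n^{[-1]}\oplus W_n^{[0]}\oplus N_1=L$, up to the $W_n^{[-1]}$ part.

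If $M\cap W_n^{[-1]}=0$, then $M\subseteq W_n^{[0]}\oplus\bigoplus_{j\ge1}(\text{submodules})$ is contained in the proper infinite subalgebra $\bigoplus_{i\ge0}W_n^{[i]}$, contradicting maximality. Hence $W_n^{[-1]}\subseteq M$ (it is irreducible), and then $M\subseteq L$. Maximality forces $M=L$.
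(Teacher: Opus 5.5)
Your proof is correct and takes essentially the paper's route. You use the Euler-induced grading, the fact that each component can only be $0$, $M_m$, $N_m$ or $W_n^{[m]}$ as a $W_n^{[0]}$-submodule, and Proposition \ref{products} with Corollary \ref{W_iWj} to show that any subalgebra properly containing $L$ contains $W_n^{[1]}$ and hence equals $W_n$. For the converse, you force $W_n^{[-1]}\subseteq M$ and rule out $M_m$ ($m\ge 1$) and $N_j$ ($j\ge 2$) by finite dimensionality, exactly as the paper does.

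Your explicit descent $[\partial_1,x_1^{m+1}\partial_2]=(m+1)x_1^m\partial_2\in M_{m-1}$ actually supplies the justification the paper leaves out when it simply asserts $W_n^{[1]}\subset S$ in the case $i_0>1$.

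One ordering fix is needed in Step 3: prove $W_n^{[-1]}\subseteq M$ first, because your exclusion of $N_j$ uses it. Excluding $M_m$ does not need it, since $[M_m,M_{km}]=M_{(k+1)m}$ works directly.
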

	\begin{proof}
		By Proposition  \ref{products}, 
		$$W_n^{[0]} = M_0 \oplus N_0,  \ W_n^{[1]} = M_1 \oplus N_1,$$ and the inclusions $[M_0, N_1] \subseteq N_1$, $[N_0, N_1] \subseteq N_1$ hold. So, $L$ is a subalgebra of the Lie algebra $W_n(\mathbb{K})$.
		Let us prove that $L$ is a maximal subalgebra of $W_n$. Take any element $D \in W_n \setminus L$ and denote by $S = \langle L, D \rangle$ the subalgebra of $W_n$ generated by $L$ and $D$. Since $L$ contains the Euler derivation $E_n$, the algebra $S$ is graded (by Lemma \ref{lm:graded}) with the grading induced from $W_n$, $S = \bigoplus_{i \ge -1} S_i$, $S_i = S \cap W_n^{[i]}$. Therefore $D = D_{-1} + D_0 + D_1 + \dots + D_k$ for some $D_i \in S_i$. The element $D$ does not belong to $L$, so there exists $i_0$ such that $D_{i_0} \in S \setminus L$. Thus, without loss of generality, one can assume that $D \in S_{i_0}$ for some $i_0 \ge 1$, moreover we can choose $i_0$ to be minimal possible.
		First, let $i_0 = 1$, i.e., $D = D_1 \in S_1 \setminus L$, $S_1 \subseteq W_n^{[1]}$. Therefore, $S_1$ is a nonzero $W_n^{[0]}$-submodule of $W_n^{[1]}$ (recall that $W_n^{[0]} \subseteq S$).
		By Remark \ref{2modules}, either $S_1=W_n^{[1]}$ or $S_1 = M_1$, or $S_1 = N_1$. But the latter equality is impossible because $D_1 \in S_1 \setminus L$. If $S_1 \supset M_1$ then $[M_1, N_1] \subseteq S_2$ and $[M_1, N_1] = W_n^{[2]}$ by Proposition \ref{products}. But then, by Corollary \ref{W_iWj}, 
		$W_n^{[1]} = [W_n^{[-1]}, W_n^{[2]}] \subset S$, and therefore 
		$W_n^{[i]} \subset S$ for $i \ge 1$. The latter means $S = W_n$.
		If $i_0 > 1$, then $W_n^{[1]} \subset S$, and again
		$S = W_n$.
		
		Next, let us prove the second part of the statement of the theorem.
		Let $T \subset W_n$ be a finite-dimensional maximal subalgebra  that contains $W_n^{[0]}$. Since  $E_n\in W_n^{[0]}\subseteq T$, where $E_n$ is the Euler derivation, the subalgebra $T$ is a graded subalgebra by Lemma \ref{lm:graded}. Then  $T=\bigoplus_{i \ge -1}T_i,$ 		where $T_i=T\cap  W_n^{[i]}$ are the homogeneous components, $T_0 = W_n^{[0]}.$  If $T_{-1}=0,$ then the subalgebra $T$ lies in the infinite dimensional proper subalgebra $\bigoplus_{i \ge 0}W_n^{[i]}$ of the Lie algebra $W_n.$ The latter contradicts the maximality of $T,$ so we obtain $T_{-1}\not =0.$ But then $T_{-1}=W_n^{[-1]}$ since $W_n^{[-1]} $ is an irreducible module over $W_n^{[0]}.$ Thus, $W_n^{[-1]}\oplus W_n^{[0]}\subseteq T.$ The subalgebra $W_n^{[-1]}\oplus W_n^{[0]}$ is not maximal in $W_n$ in view of the part (1) of the theorem, therefore $T\not =W_n^{[-1]}\oplus W_n^{[0]}$. So, we have  $T_i\not =0$ for some $i\geq 1.$ First consider the case when $T_i\not =0$ for $i\geq 2.$ 
		As $T_i$ is a $	W_n^{[0]}$-submodule of $W_n^{[i]}$ we have by Remark \ref{2modules} that $T_i\in \{ M_i, N_i, W_n^{[i]}\} .$	 If $T_i=M_i,$ then $[M_i, M_i]=M_{2i} \subset  T.$ Analogously one can show that $M_{ki}\subset T$ for $k\geq 2. $ The latter contradicts our assumption that  $T$ is finite dimensional and therefore $T_i\not =M_i.$ Analogously one can show that $T_i\not = W_n^{[i]}.$ It follows from these considerations that $T_i=N_i.$ But then $W_n^{[i-1]}=[W_n^{[-1]}, N_i]\subseteq T.$ Applying  Proposition \ref{products} we get $[W_n^{[i-1]}, N_i]=W_n^{[2i-1]}\subseteq T.$ But then $W_n^{[k(2i-1)]}\subseteq T, k\geq 1 $ and $T$ is infinite dimensional. This contradicts our assumption about $T.$ The obtained contradiction shows that $T_i=0$ for $i\geq 2,$ i.e., $$T\subseteq W_n^{[-1]} \oplus W_n^{[0]} \oplus W_n^{[1]}.$$
		Repeating the above considerations one can show that $T_1\not =M_1$ and $T_1\not =W_n^{[1]}. $ Therefore $T_1=N_1$ and $T=W_n^{[-1]} \oplus W_n^{[0]} \oplus N_1.$
	\end{proof}
	
	\begin{proposition}
		The subalgebra $L = W_n^{[-1]} \oplus W_n^{[0]} \oplus N_1$ from the statement of the previous theorem
		is isomorphic to the special linear Lie algebra $\mathfrak{sl}_{n+1}(\mathbb K)$.
	\end{proposition}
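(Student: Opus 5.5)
The plan is to write down the standard projective action of $\mathfrak{sl}_{n+1}(K)$ on $K^n$ by vector fields and check that it lands exactly on $L$. First I would describe $N_1$ explicitly. By Theorem \ref{ThKS}, $N_1$ consists of the elements of $W_n^{[1]}$ that are polynomial multiples of $E_n$, so
$$N_1=\{\ell E_n \mid \ell=a_1x_1+\dots+a_nx_n,\ a_i\in K\}.$$
Hence $\dim L=n+n^2+n=n^2+2n=\dim\mathfrak{sl}_{n+1}(K)$.

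Next I would define a linear map $\varphi:\mathfrak{sl}_{n+1}(K)\to L$. Index the rows and columns of the matrices by $0,1,\dots,n$ and write $e_{ij}$ for the matrix units. For $i,j\ge 1$ set $\varphi(e_{ij})=x_i\frac{\partial}{\partial x_j}$ on the traceless part, with the diagonal handled as below. Set $\varphi(e_{i0})=\frac{\partial}{\partial x_i}$, $\varphi(e_{0j})=-x_jE_n$, and send the diagonal matrices $h=\operatorname{diag}(h_0,h_1,\dots,h_n)$ with $\sum h_k=0$ to $\sum_{i\ge1}(h_i-h_0)x_i\frac{\partial}{\partial x_i}$. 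The signs may need adjusting in the verification, for instance to $\varphi(e_{i0})=-\frac{\partial}{\partial x_i}$ and $\varphi(e_{0j})=x_jE_n$. Equivalently, $\varphi(X)$ is the vector field induced on the affine chart $\{z_0=1\}$ of $\mathbb P^n$ by the linear vector field $-\sum_{k,l}X_{kl}z_l\frac{\partial}{\partial z_k}$. This gives conceptual reasons why $\varphi$ is a homomorphism: linear vector fields $\mathfrak{gl}_{n+1}\to W_{n+1}$ form a (anti)homomorphism, the Euler field of $K^{n+1}$ maps to zero, and dehomogenizing preserves brackets. I would still record the direct check.

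The identity that carries the check is
$$[\tfrac{\partial}{\partial x_i},x_jE_n]=\delta_{ij}E_n+x_j\tfrac{\partial}{\partial x_i}.$$
It reproduces the commutator $[e_{i0},e_{0j}]=e_{ij}-\delta_{ij}e_{00}$. Two further brackets are needed:
$$[x_jE_n,x_kE_n]=x_jx_kE_n-x_kx_jE_n=0,$$
using $E_n(x_k)=x_k$, which matches $[e_{0j},e_{0k}]=0$; and $[\frac{\partial}{\partial x_i},\frac{\partial}{\partial x_k}]=0$. The brackets of $W_n^{[0]}\cong\mathfrak{gl}_n$ with the other two pieces are then standard.

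For bijectivity, $\varphi$ maps the span of the $e_{i0}$ onto $W_n^{[-1]}$ and the span of the $e_{0j}$ onto $N_1$, both bijectively. The block $\{e_{ij}\}_{i,j\ge1}\oplus K e_{00}$ modulo the identity maps onto $W_n^{[0]}$ with trivial kernel; note that $e_{00}-\frac{1}{n+1}I$ goes to a nonzero multiple of $E_n$. So $\varphi$ is a linear isomorphism, and with the bracket relations it is a Lie algebra isomorphism.

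The main obstacle I expect is bookkeeping: making the signs and the diagonal/Euler component consistent, so that the $\mathfrak{gl}_n$ part and the $e_{00}$ contribution match. I would fix the conventions through the projective-chart description first and then verify the single mixed bracket above explicitly.

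An alternative shortcut is available. $L$ is a maximal finite-dimensional subalgebra by Theorem \ref{th5}, graded with $L_{-1}\cong K^n$, $L_0=\mathfrak{gl}_n$, $L_1\cong(K^n)^*$, and the bracket $[L_{-1},L_1]=L_0$ holds by Proposition \ref{products}(5). Showing that $L$ is simple would then identify it as a $|1|$-graded simple algebra of dimension $n^2+2n$, hence $\mathfrak{sl}_{n+1}$. The explicit map is cleaner, so I would use it.
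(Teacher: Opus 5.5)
Write $\partial_i=\frac{\partial}{\partial x_i}$. The explicit map you write down is not a homomorphism, and it is \emph{not} equivalent to your projective-chart description.

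\textbf{The explicit formulas fail.} Take $n\ge 2$. In $\mathfrak{sl}_{n+1}$ we have $[e_{10},e_{12}]=0$. Your map gives $[\varphi(e_{10}),\varphi(e_{12})]=[\pm\partial_1,x_1\partial_2]=\pm\partial_2\neq 0$.

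The bracket you say carries the check fails as well. In both of your sign variants the product of the two signs is $-1$, so $[\varphi(e_{i0}),\varphi(e_{0j})]=-(\delta_{ij}E_n+x_j\partial_i)$. For $i\neq j$ this is $-x_j\partial_i$, whereas $\varphi(e_{ij})=x_i\partial_j$. For $i=j$, your diagonal rule gives $\varphi(e_{ii}-e_{00})=+(E_n+x_i\partial_i)$, so the sign is wrong.

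No sign adjustment can repair this, because the defect is a transposition. Your $\mathfrak{gl}_n$ block and diagonal follow the convention $e_{ij}\mapsto x_i\partial_j$, $e_{00}\mapsto -E_n$. Your placement of $\partial_i$ at $e_{i0}$ and of $x_jE_n$ at $e_{0j}$ belongs to the transposed convention.

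\textbf{What a correct map looks like.} Actually computing the chart map, for $i,j\ge 1$, gives
$e_{ij}\mapsto -x_j\partial_i$, $e_{i0}\mapsto -\partial_i$, $e_{0j}\mapsto x_jE_n$, $e_{00}\mapsto E_n$.
Equivalently, $\operatorname{diag}(h_0,\dots,h_n)\mapsto\sum_i(h_0-h_i)x_i\partial_i$.

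Alternatively, keep $e_{ij}\mapsto x_i\partial_j$ and $e_{00}\mapsto -E_n$, and set $e_{0i}\mapsto\partial_i$, $e_{i0}\mapsto -x_iE_n$. This is exactly the inverse of the paper's map $\partial_i\mapsto E_{n+1,i}$, $x_iE\mapsto -E_{i,n+1}$, with the index $n+1$ relabelled $0$.

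\textbf{Your conceptual argument, taken as the definition, is a complete proof.} It also differs genuinely from the paper's route. The steps are:
\begin{itemize}
\item $X\mapsto -\sum X_{kl}z_l\partial_{z_k}$ is a Lie homomorphism $\mathfrak{gl}_{n+1}\to W_{n+1}$.
\item These linear fields, viewed as derivations of $K[z_0^{\pm1},z_1,\dots,z_n]$, commute with the Euler field $Z$ of $K^{n+1}$. They preserve the subring $K[z_1/z_0,\dots,z_n/z_0]\cong A$.
\item Restricting derivations to a preserved subring respects brackets.
\item $I_{n+1}\mapsto -Z$, which acts by zero on that subring.
\item The images of the matrix units span exactly $W_n^{[-1]}\oplus W_n^{[0]}\oplus N_1=L$.
\item Since $\dim L=n^2+2n$, the kernel is exactly $KI_{n+1}$, and $\mathfrak{sl}_{n+1}\cong\mathfrak{gl}_{n+1}/KI_{n+1}\cong L$.
\end{itemize}

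\textbf{Comparison with the paper.} The paper defines $\varphi\colon L\to\mathfrak{sl}_{n+1}$ by hand and checks brackets on pairs of basis elements. Your route explains why such a map exists and replaces the case analysis by computing four families of images. Those images have to be computed, not guessed, and that is exactly where your write-up went wrong.
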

	\begin{proof}
		Define the linear map $\varphi \colon L \to \mathfrak{sl}_{n+1}(\mathbb K)$ by:
		$$		\varphi(\partial_{i})= E_{n+1,i}, \ 
		\varphi(x_{i}\partial_{j})= E_{ij} \quad (i \neq j), \ 
		\varphi(x_{i}\partial_{i})= E_{ii} - \frac{1}{n+1} I_{n+1}, \
		\varphi(x_{i}E)= -E_{i,n+1}.
		$$	
		One can  easily prove that  $\varphi$ is a Lie algebra homomorphism. The verification involves checking that $\varphi$ preserves the Lie bracket for all pairs of basis elements. Key cases include:
		\begin{itemize}
			\item $[\partial_{i},\partial_{j}]=0$ maps to $[E_{n+1,i},E_{n+1,j}]=0$.
			\item $[\partial_{i},x_{j}\partial_{k}]=\delta_{ij}\partial_{k}$ maps to $[E_{n+1,i},E_{jk}]=\delta_{ij}E_{n+1,k}$ when $j\neq k$, with similar handling for $j=k$.
			\item $[\partial_{i},x_{j}E]=\delta_{ij}E+x_{j}\partial_{i}$ maps to $[E_{n+1,i},-E_{j,n+1}]=E_{j,i}-\delta_{ij}E_{n+1,n+1}$, matching $\varphi(\delta_{ij}E+x_{j}\partial_{i})$.
			\item $[x_{i}\partial_{j},x_{k}\partial_{t}]=\delta_{jk}x_{i}\partial_{t}-\delta_{ti}x_{k}\partial_{j}$ maps to $[E_{ij},E_{kl}]=\delta_{jk}E_{ti}-\delta_{ti}E_{kj}$ after careful case analysis for diagonal elements.
			\item $[x_{i}\partial_{j},x_{k}E]=\delta_{jk}x_{i}E$ maps to $[E_{ij},-E_{k,n+1}]=-\delta_{jk}E_{i,n+1}$.
			\item $[x_{i}E,x_{j}E]=0$ maps to $[-E_{i,n+1},-E_{j,n+1}]=0$.
		\end{itemize}
		All checks confirm that $\varphi([u,v])=[\varphi(u),\varphi(v)]$ for basis elements, and $\varphi$ is bijective. Hence, $\varphi$ is an isomorphism of Lie algebras.
	\end{proof}
	
	\begin{proposition}
		Every infinite-dimensional maximal subalgebra of $W_n$ that contains $W_n^{[0]}$ coincides with the subalgebra
		$\bigoplus_{k \ge 0} W_n^{[k]}$.
	\end{proposition}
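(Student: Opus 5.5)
Let $T$ be an infinite-dimensional maximal subalgebra of $W_n$ with $W_n^{[0]}\subseteq T$. Since $E_n\in W_n^{[0]}\subseteq T$, Lemma \ref{lm:graded} shows that $T$ is graded, $T=\bigoplus_{i\ge -1}T_i$ with $T_i=T\cap W_n^{[i]}$. Each $T_i$ is then a $W_n^{[0]}$-submodule of $W_n^{[i]}$, and $T_0=W_n^{[0]}$.

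First, $\bigoplus_{k\ge 0}W_n^{[k]}$ is a proper subalgebra, since $[W_n^{[i]},W_n^{[j]}]\subseteq W_n^{[i+j]}$. It is maximal by the following argument. Let $S$ be a subalgebra strictly containing it. Then $S$ is graded by Lemma \ref{lm:graded}, so $S_{-1}\neq 0$. Because $W_n^{[-1]}$ is an irreducible $W_n^{[0]}$-module, this forces $S_{-1}=W_n^{[-1]}$, and hence $S=W_n$.

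The main step is to show $T_{-1}=0$; after that, $T\subseteq \bigoplus_{k\ge0}W_n^{[k]}$ together with the maximality of $T$ gives equality. Suppose instead that $T_{-1}\neq 0$. Then $T_{-1}=W_n^{[-1]}$ by irreducibility, so $T\supseteq W_n^{[-1]}\oplus W_n^{[0]}$. Since $T$ is infinite-dimensional, some $T_i\neq 0$ with $i\ge 2$, and by Remark \ref{2modules} each nonzero $T_i$ with $i\ge 1$ is one of $M_i$, $N_i$ or $W_n^{[i]}$. We treat these cases in turn.
\begin{itemize}
\item If $N_i\subseteq T_i$ for some $i\ge 2$: Proposition \ref{products}(5) gives $W_n^{[i-1]}=[W_n^{[-1]},N_i]\subseteq T$. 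Since $i-1\ge 1$, we have $N_{i-1}\subseteq W_n^{[i-1]}\subseteq T$, so $T_{i-1}$ again contains $N_{i-1}$. Descending in this way, and then bracketing $W_n^{[-1]}$ with the full component $W_n^{[i-1]}$, we obtain $W_n^{[1]}\subseteq T$. Then $M_1\subseteq T$, and $[M_1,N_1]=W_n^{[2]}$ by Proposition \ref{products}(4), exactly as in the proof of Theorem \ref{th5}. Corollary \ref{W_iWj} then generates every $W_n^{[k]}$, $k\ge1$, so $T=W_n$, a contradiction.
\item If $T_i=M_i$ with $i\ge 1$: the divergence-free part is handled by bracketing with $W_n^{[-1]}$, since $[W_n^{[-1]},M_i]\subseteq M_{i-1}$ (partial derivatives preserve zero divergence). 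This only yields divergence-free elements, so it does not by itself produce any $N_j$.
\end{itemize}
The obstacle is therefore the configuration in which $T_i\in\{0,M_i\}$ for all $i\ge1$, i.e.
$$T\subseteq W_n^{[-1]}\oplus W_n^{[0]}\oplus\bigoplus_{i\ge1}M_i.$$

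To eliminate this configuration, observe that the right-hand side is exactly the subalgebra $L_1$ of derivations with constant divergence. It is a subalgebra because $\operatorname{div}[D_1,D_2]=D_1(\operatorname{div}D_2)-D_2(\operatorname{div}D_1)=0$ when both divergences are constant, so $[L_1,L_1]$ is even divergence-free. Maximality of $T$ then forces $T=L_1$. If $L_1$ were maximal, the claim of the proposition would fail, since $L_1$ is not of the form $\bigoplus_{k\ge0}W_n^{[k]}$. So the plan must show that $L_1$ is not maximal, by exhibiting a proper subalgebra strictly between $L_1$ and $W_n$. This is where I am least certain the argument goes through.

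The natural route is to add $N_1$, i.e. to study the subalgebra generated by $L_1$ and $N_1$, and check whether it is proper. Proposition \ref{products}(4) gives $[M_1,N_1]=W_n^{[2]}$, so this generated subalgebra contains $W_n^{[2]}$, hence $N_2$. Combining this with the first case above shows it is all of $W_n$. So adding $N_1$ does not produce a proper intermediate subalgebra. The same kind of computation suggests that $L_1$ may in fact be a maximal subalgebra, because every graded enlargement of $L_1$ picks up some $N_j$ and then generates everything. If that holds, the statement is false as written. In that case the correct conclusion would be that $T$ equals either $\bigoplus_{k\ge0}W_n^{[k]}$ or the constant-divergence algebra $L_1$, and the proof would have to be amended accordingly. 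I would carry out this check before finalizing the argument.
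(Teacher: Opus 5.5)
\textbf{The statement is false for $n\ge 2$, and the check you postpone goes through.} The constant-divergence algebra $L_1=W_n^{[-1]}\oplus W_n^{[0]}\oplus\bigoplus_{i\ge1}M_i$ is itself a maximal subalgebra of $W_n$, so the proposition cannot be proved as written.

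To see maximality, let $S\supsetneq L_1$ be a subalgebra. Since $E_n\in S$, $S$ is graded by the grading lemma. So $S$ contains a homogeneous $D\in W_n^{[j]}$ with $j\ge1$ and $\operatorname{div}D\ne0$. Its $M_j$-component lies in $L_1\subseteq S$, so subtracting it leaves a nonzero element of $S\cap N_j$. Irreducibility of $N_j$ then gives $N_j\subseteq S$.

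If $j\ge2$, then $[\partial_1,x_1^jE_n]=jx_1^{j-1}E_n+x_1^j\partial_1$ lies in $S\cap W_n^{[j-1]}$ and has divergence $j(j+n)x_1^{j-1}\ne0$. The same argument gives $N_{j-1}\subseteq S$, and descending in this way yields $N_1\subseteq S$.

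Next, for each $k\ge1$, the element $[x_2^{k+1}\partial_1,\,x_1E_n]=x_2^{k+1}E_n-kx_1x_2^{k+1}\partial_1$ lies in $S\cap W_n^{[k+1]}$. Its divergence is $(n+1)x_2^{k+1}\ne0$, so $N_{k+1}\subseteq S$. Therefore $S\supseteq L_1+\sum_{m\ge1}N_m=W_n$.

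Consequently $L_1$ is an infinite-dimensional maximal subalgebra that contains $W_n^{[0]}$ and $\partial_1$, and so it differs from $\bigoplus_{k\ge0}W_n^{[k]}$. For $n=1$ this algebra collapses to $\operatorname{span}\{\partial_x,x\partial_x\}$, and there the statement does hold.

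\textbf{Where the paper's proof fails.} It breaks exactly at the configuration you isolated. After assuming $L_{-1}=W_n^{[-1]}$, it asserts without justification that a maximal $L$ ``does not lie in the subalgebra of all derivations with constant divergence''. This is false for $L=L_1$.

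\textbf{The correct result} is your dichotomy: an infinite-dimensional maximal subalgebra containing $W_n^{[0]}$ is either $\bigoplus_{k\ge0}W_n^{[k]}$ or $L_1$, and both occur. Your own argument already shows that the first is maximal.

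\textbf{One case to add.} To make your classification airtight, treat the case you skipped: $N_1\subseteq T_1$ with $T_i\in\{0,M_i\}$ for all $i\ge2$. Infinite-dimensionality gives $T_i=M_i$ for some $i\ge2$. Then $[M_i,N_1]=W_n^{[i+1]}$ puts $N_{i+1}$ into $T$, which returns you to your first bullet.
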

	\begin{proof}
		Let $L$ be an infinite-dimensional maximal subalgebra of $W_n$. 
		As in the previous theorem $L$ is a graded subalgebra
		$	L = \bigoplus_{i \ge -1} L_i,	$
		where $L_i=L_i\cap W_n^{[i]}$ are the homogeneous components,
		$L_i \in \{0, M_i, N_i, W_n^{[i]}\}$ for $i \ge 0$, and
		$L_{-1}$ is either zero or coincides with $W_n^{[-1]}$.
		It suffices to prove that $L_{-1} = 0$.
		Suppose that $L_{-1} = W_n^{[-1]}$.
		$L$ is maximal subalgebra and it does not lie in the subalgebra of all derivations with constant divergence,
		hence $L_k \supset N_k$ for some $k \ge 1$. Then, $k > 1$, since $W_n^{[-1]} \oplus W_n^{[0]} \oplus N_1$
		is a maximal subalgebra by the previous theorem.
		Then, $[W_n^{[-1]}, N_{k}] = W_{n}^{[k-1]} \subset L_{k-1}$ by Proposition  \ref{products}.
		Further, $L_i = W_n^{[i]}$ for $i < k$.
		Then 
		$$L_k \supset [L_1, L_{k-1}] = [W_n^{[1]}, W_n^{[k-1]}] = W_n^{[k]}.$$
		Applying the same argument we obtain that $L_i = W_n^{[i]}$ for $i \ge k$, and thus $L = W_n$. The latter contradicts our assumption on $L.$  The obtained contradiction shows that $L=\bigoplus_{k \ge 0} W_n^{[k]}.$
		
	\end{proof}
	
	\begin{theorem}
		Let $I_k = (x_{k+1}, \dots, x_n)$ be the ideal in $A$ generated by the last $n-k$ variables. Denote by $L_k$ the subalgebra that preserves the ideal $I_k$, i.e., 
		$L_k = \{ D \in W_n \mid D(I_k) \subseteq I_k \}.$
		Then $L_k$ is a maximal subalgebra of $W_n$ for $i=1, \ldots , n-1.$ 
		
	\end{theorem}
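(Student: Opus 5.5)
We have to show that $L_k=\{D\in W_n \mid D(I_k)\subseteq I_k\}$ is maximal, where $I_k=(x_{k+1},\dots,x_n)$. We write $L_k$ explicitly and then show that any derivation outside $L_k$, together with $L_k$, generates all of $W_n$.

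First, a derivation $D=\sum f_j\partial_j$ lies in $L_k$ if and only if $f_j=D(x_j)\in I_k$ for every $j>k$; the coefficients $f_1,\dots,f_k$ are arbitrary. So
$$L_k=\bigoplus_{j\le k}A\partial_j\oplus\bigoplus_{j>k}I_k\partial_j .$$
Note that $I_kW_n\subseteq L_k$ and $L_k\neq W_n$, since $\partial_n\notin L_k$.

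Now take $D\in W_n\setminus L_k$ and let $S$ be the subalgebra generated by $L_k$ and $D$. Subtracting an element of $L_k$, we may assume
$$D=\sum_{j>k} g_j\partial_j ,\qquad g_j\in K[x_1,\dots,x_k],$$
with some $g_{j_0}\neq 0$. This is possible because $A=K[x_1,\dots,x_k]\oplus I_k$. The goal is to show that $\partial_{j}\in S$ for all $j>k$. Since $S\supseteq L_k$ and $A=K[x_1,\dots,x_k]+I_k$, this reduces to showing $h\partial_j\in S$ for all $h\in K[x_1,\dots,x_k]$ and all $j>k$, and then $S=W_n$.

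The reduction to a constant coefficient proceeds in three steps.

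1. Since $k\ge1$, we have $\partial_1,\dots,\partial_k\in L_k$. Bracketing $D$ repeatedly with these lowers the degree of $g_{j_0}$ in $x_1,\dots,x_k$. Because the characteristic is zero, applying a suitable monomial in $\partial_1,\dots,\partial_k$ to a top-degree monomial of $g_{j_0}$ yields an element
$$D'=\sum_{j>k}c_j\partial_j\in S,\qquad c_j\in K,\ c_{j_0}\neq0 .$$

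2. The linear fields $x_j\partial_i$ with $i,j>k$ lie in $L_k$, and $[x_i\partial_l,\sum c_j\partial_j]=-c_i\partial_l$. Choosing $i=j_0$ gives $\partial_l\in S$ for every $l>k$.

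3. For any $h\in K[x_1,\dots,x_k]$ we have $h\,x_j\partial_j\in L_k$, and
$$[\partial_j,\,h\,x_j\partial_j]=h\partial_j .$$
Hence $h\partial_j\in S$, which finishes the argument.

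The main point needing care is step 1. Bracketing with $\partial_i$ for $i\le k$ differentiates the coefficients $g_j$ and does not mix the indices $j$, so after applying the right operator we really do obtain a constant vector field with nonzero component $j_0$. One might hope to use Lemma \ref{lm:graded} for this, but the Euler derivation $E_n$ does lie in $L_k$ while the reduction is most cleanly done by direct differentiation, which is what we use. The argument needs $1\le k\le n-1$: $k\ge1$ is required for the $\partial_i$ with $i\le k$ in step 1, and $k\le n-1$ for $I_k$ to be a proper nonzero ideal.
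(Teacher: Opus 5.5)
Your proof is correct and follows essentially the same route as the paper. The paper phrases it as irreducibility of the $L_k$-module $W_n/L_k$, but uses the same moves: differentiating by $\partial_1,\dots,\partial_k$ to reach a constant field, bracketing with linear fields $x_i\partial_l$ ($i,l>k$), and using $[x_jf\partial_q,\partial_j]=-f\partial_q$ for $f\in K[x_1,\dots,x_k]$.

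One small point in step 1: the other components become constant only if $g_{j_0}$ is chosen of maximal total degree among the $g_j$. This does not affect the argument, since $[x_i\partial_l,\sum_{j>k}h_j\partial_j]=-h_i\partial_l$ holds for arbitrary $h_j\in K[x_1,\dots,x_k]$.
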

	
	\begin{proof} One can easily to show that 
		$$L_k=A\frac{\partial}{\partial x_1} + \dots +A\frac{\partial}{\partial x_k} + I_k \frac{\partial}{\partial x_{k+1}} + \dots + I_k \frac{\partial}{\partial x_n}.
		$$
		Since $L_k$ is a Lie subalgebra in $W_n$ we can consider $L_k$  and $W_n$ as   $L_k$-modules.  
		Consider the quotient module $Q=W_n/L_k.$ Denote ${\frac{\overline{\partial}}{\partial x_i}}=\frac{\partial}{\partial x_i}+L_k, i=1, \ldots , n.$ One can easily  show that 
		$$			Q = W_n / L_k=(A/I_k){\frac{\overline{\partial}}{\partial x_{k+1}}} + \dots + (A/I_k){\frac{\overline{\partial}}{\partial x_n}}.$$
		Note that the quotient ring $A/I_k $ is isomorphic to the polynomial ring $K[x_1, \dots, x_k],$ so we have  
		$$W_n/L_k\simeq {K}[x_1, \dots, x_k]{\frac{\overline{\partial}}{\partial x_{k+1}}}+\cdots + {K}[x_1, \dots, x_k]{\frac{\overline{\partial}}{\partial x_n}}.
		$$
		Our aim is to show that $Q=W_n/L_k$ is an irreducible $L_k$-module. 
		Take any $\overline{D}\in Q.$	Then $\overline{D}$ is of the form  
		$$		
		\overline{D} = f_{k+1}(x_1, \dots, x_k){\frac{\overline{\partial}}{\partial x_{k+1}}} + \cdots + f_n(x_1, \dots, x_k){\frac{\overline{\partial}}{\partial x_n}}.
		$$	
		Since  $\frac{\partial}{\partial x_i} \in L_k$ for $1 \le i \le k$, we have:
		$$[\frac{\partial}{\partial x_i},  \overline{D}] = \frac{\partial f_{k+1}}{\partial x_i} \frac{\overline{\partial}}{\partial x_{k+1}} + \dots + \frac{\partial f_n}{\partial x_i} \frac{\overline{\partial}}{\partial x_n} \in L_k\langle \overline{D}\rangle .
		$$
		$$	\overline{D}_1 = c_{k+1}\frac{\overline{\partial}}{\partial x_{k+1}} + \dots + c_n \frac{\overline{\partial}}{\partial x_n} \in L_k\langle \overline{D}\rangle ,
		$$
		where $c_{k+1}, \dots, c_n \in K$ and $c_j \ne 0$ for some $j,$ $k+1\leq j\leq n.$
		
		Then $[-x_j \frac{\partial}{\partial x_j}, \overline{D}_1] = c_j \frac{\overline{\partial}}{\partial x_j} \in L_k \langle \overline{D} \rangle$.
		Finally, 
		\begin{equation*}
			[-x_j f(x_1, \dots, x_k) \frac{\partial}{\partial x_q}, \frac{\overline{\partial}}{\partial x_j}] = f(x_1, \dots , x_k) \frac{\partial}{\partial x_q} \in L_k \langle \overline{D} \rangle,
		\end{equation*} 
		for all $q = k+1,\dots,n$, $f \in \mathbb{K}[x_1, \dots, x_k]$.
		
		Thus, $L_k \langle \overline{D} \rangle = Q$ for an arbitrary nonzero $\overline{D} \in Q, $ which means $Q$ is an irreducible $L_k$-module.
		It follows directly that $L_k$ is a maximal subalgebra in $W_n$. 
		Indeed, if there existed an intermediate subalgebra $L_k \subsetneq M \subsetneq W_n$, then there would exist a non-trivial $L_k$-submodule $M/L_k$ in $W_n/L_k = Q$, which contradicts the irreducibility of the module $Q$.
	\end{proof}
	
	\begin{remark}
		$L_k$ can be interpreted as the Lie algebra of polynomial vector fields that leave the coordinate $k$-plane $x_{k+1} = \dots = x_n = 0$ invariant.
	\end{remark}
	
	\section*{Acknowledgments}	%
	{The first author of this work was partially supported by a grant
		from the Simons Foundation (SFI-PD-Ukraine-00014586, Y.Y.C.).
		Yevhenii Chapovskyi expresses his gratitude for the financial support of the National Academy of Sciences of Ukraine
		within the framework of the project 0125U002856 for young scientists.}

\end{document}